\documentclass[a4paper,10pt]{amsart}
\usepackage[latin1,latin9]{inputenc}  
\usepackage[T1]{fontenc} 
\usepackage{verbatim}
\usepackage{amsmath}
\usepackage{amssymb}
\usepackage{amscd}
\usepackage{german}

\newcommand{\N}{\mathbb{N}}
\newcommand{\Z}{\mathbb{Z}}

\newtheorem{theorem}{Theorem}[section]


\title{Otto Toeplitz: Algebraiker der unendlichen Matrizen } 
\thanks{Erweiterte Fassung eines
Vortrags zum 80j\"ahrigen Bestehen der Mathematischen Semesterberichte, der am 23. April 2013 in Bonn gehalten wurde.}

\author{Stefan M\"uller-Stach}
\email{mueller-stach@uni-mainz.de}
\address{Johannes Gutenberg-Universit\"at Mainz}


\begin{document}

\begin{abstract} Otto Toeplitz ist ein Mathematiker, dessen Schicksal exemplarisch f\"ur die Vernichtung der j\"udischen
wissenschaftlichen Elite in Deutschland durch die Nationalsozialisten ist.
Sein Einfluss in der Mathematik ist noch heute, besonders durch den Begriff der Toeplitzmatrizen, deutlich sp\"urbar. 
Die kulturgeschichtliche Bedeutung von Toeplitz ist ebenso gro{\ss}. 
Als Gr\"undungsherausgeber von zwei Zeitschriften hat er sein Engagement f\"ur die 
Didaktik und die Wissenschaftsgeschichte untermauert. 
Wir geben einen Einblick in sein Schicksal und seine Gedankenwelt unter dem Einfluss von David Hilbert
und Felix Klein.
\end{abstract} 

\maketitle

\section*{Einleitung}

Die Mathematischen Semesterberichte wurden 1932 von Heinrich Behnke und Otto Toeplitz gegr\"undet.
Heinrich Behnke mit seinen herausragenden Verdiensten um die Mathematikdidaktik, die Lehrerfortbildung,
seinem mathematischen \OE{}uvre und seiner in der Nachkriegszeit so einflussreichen Schule ist 
im Hinblick auf die Semesterberichte schon gew\"urdigt worden \cite{hartmann1,hartmann2}. 
Mit dem vorliegenden Aufsatz wollen wir nun stattdessen einmal Otto Toeplitz gerecht werden. 

Es soll insbesondere herausgearbeitet werden, wie nahe die Idee der Semesterberichte zur Gedankenwelt von Toeplitz 
und der von Behnke ist. 
Toeplitz, der aus Breslau stammte, wurde zun\"achst in G\"ottingen von David Hilbert und Felix Klein stark beeinflusst. 
Ab den 1920er Jahren entwickelte er aber vollkommen eigene Ideen, sowohl in seiner Forschung als auch im Bereich
der Wissenschaftsgeschichte und der Hochschuldidaktik. Die beiden von ihm gegr\"undeten Zeitschriften, seine historischen Seminare sowie
seine Reden und Schriften zeigen seine Individualit\"at und seine mittlerweile erreichte inhaltliche Distanz zu seinen beiden Vorbildern, mit denen er 
auch eine umfangreiche Korrespondenz unterhielt.

F\"ur Toeplitz bedeutete Deutschland die Erf\"ullung seiner wissenschaftlichen Tr\"aume. 
Durch die Nationalsozialisten wurde Toeplitz aus seinem Amt entfernt und emigrierte 1939 nach 
Jerusalem, wo er Anfang 1940 starb. Zwischen 1933 und 1939 hatte er weiter intensiv 
seine Ideen verfolgt und sich f\"ur die Juden in Deutschland in mehrfacher Weise eingesetzt.

\section*{Otto Toeplitz: Der Mensch}

Otto Toeplitz muss eine bemerkenswerte Pers\"onlichkeit mit herausragenden Charaktereigenschaften 
gewesen sein. Dies wird wunderbar in der liebevollen Beschreibung aus einem Nachruf von Heinrich Behnke \cite[S. 1]{behnke2} ausgedr\"uckt:  \\

\noindent
{\it Otto Toeplitz war Mathematiker. Aber das charakterisiert ihn bei weitem nicht. Er war zugleich P\"adagoge und Historiker seines Faches und zu allererst ein hilfsbereiter, 
mitf\"uhlender Mensch. Er geh\"orte zu den Charakteren, deren Bild man nicht wieder aus dem Ged\"achtnis verliert, wenn man mit Ihnen auch nur einmal ein 
Gespr\"ach gef\"uhrt hat. Er lauschte den Worten seines Gegen\"ubers, wer auch immer es war, mit ganz ungeteilter Aufmerksamkeit - wie es so selten geschieht - und sprach dann
selbst mit gro{\ss}em psychologischen Einf\"uhlungsverm\"ogen. Seine freundlichen Augen, sein Mienenspiel, all seine Bewegungen standen im Banne der geistigen Vorg\"ange in ihm. 
Er sprach mit einer wohltuenden W\"arme, einer Offenheit und einer Kritik an sich selbst, die immer wieder \"uberraschte ... \\ 
Aus allem, was er tat, sprach sein 
Verantwortungsgef\"uhl gegen\"uber den geistigen M\"achten, seine Liebe zu Deutschland, vor allem zur akademischen Jugend, und seine Verpflichtung gegen\"uber der deutschen Universit\"at, 
- nicht zuf\"allig hat Karl Jaspers die 1. Auflage seines Buches ,,Die Idee der Universit\"at'' ihm gewidmet.}\\

\noindent Die darin beschriebene Liebe zu Deutschland, und das Engagement von Otto Toeplitz als Universit\"atsprofessor wurden schmerzlich verraten durch das Schicksal, 
das ihm nach 1933 zusammen mit den anderen j\"udischen Kollegen widerfuhr \cite[S. 103]{ausstellung}. 
Weitere Nachrufe und Beschreibungen der Person von Otto Toeplitz verfassten Stefan Hildebrandt \cite{bms319} und Wilhelm Klingenberg \cite{bms143}.
Nachrufe zum wissenschaftlichen Werk erschienen von Peter Lax in \cite{bms319}, Jean Dieudonn\'e in \cite{gohberg}, Israel Gohberg in \cite{gohberg} 
und Gottfried K\"othe in \cite{gohberg} sowie \cite{koethe}. 
Die Kinder von Otto und Erna Toeplitz, die Tochter Eva Wohl \cite{wohl} und der Sohn Dr. Uri Toeplitz \cite{uri}, haben Autobiographien geschrieben, in denen 
ihre Eltern eine wichtige Rolle spielen. Eva Wohl best\"atigt die Einsch\"atzung von Behnke in ihren Erinnerungen 
,,So einfach liegen die Dinge nicht'' (eine Redensart von Otto Toeplitz) \cite[S. 40]{wohl}: \\

\noindent {\it Um seine Vorlesungen vorzubereiten, machte Vater oft einen Spaziergang durch die Kaiser-Friedrich-Stra{\ss}e bis ans Rheinufer. Manchmal durfte ich ihn dabei begleiten.
Dann hielt er mich an seiner Hand und lie{\ss} mich hopsen. Dabei pr\"ufte er mich auf mathematische Scherzfragen, von denen eine war: Zwei V\"ater und zwei S\"ohne a{\ss}en zusammen
drei Eier, wie ist das m\"oglich? Er hatte ein ganzes Buch solcher Fragen, sie stammten aus dem Besitz seines Vaters. Ein anderer Scherz war die Gleichung 
$({\rm Klim}+{\rm Bim})^2={\rm Klim}^2+2 {\rm Klimbim} + {\rm Bim}^2$ ... Mein Vater war ein sehr attraktiver Mann. Es muss wohl sein au{\ss}erordentlicher Charme 
gewesen sein, der ihn so beliebt bei Frauen machte. Er konnte sich leicht in die Lage seines Gegen\"ubers versetzen und war deshalb interessant in der Unterhaltung. 
Eine seiner Maximen war: ,,Man muss hinh\"oren k\"onnen''. Zudem war er \"au{\ss}erst gebildet und interessiert an vielen Dingen, auch solchen, die au{\ss}erhalb der Mathematik lagen.
Wie ich schon erw\"ahnt habe, liebte er Musik, und auch die leiblichen Gen\"usse, gutes Essen und Trinken, waren ihm sehr wichtig. Dazu war er einer der g\"utigsten Menschen, die man sich 
vorstellen kann.}\\

\noindent Das Buch von Eva Wohl berichtet ausf\"uhrlich \"uber die Kindheit in Deutschland und ihr Leben nach ihrer Auswanderung im Jahre 1939.
Dr. Erich (Uri) Toeplitz war Fl\"otist und Mitbegr\"under des Israel Philharmonic Orchestra, das seit 1936 bestand. Er schreibt \"uber seinen Vater in \cite[S. 49]{uri}: \\

\noindent
{\it Von Zeit zu Zeit kamen Mathematiker nach Kiel zu Besuch. Sie wurden dann gastlich aufgenommen und wohnten auch bei uns. H\"aufiger Gast war Ernst Hellinger, Professor
in Frankfurt am Main, ein alter Freund und Studienfreund meines Vaters aus Breslau ... Hellinger spendierte zum Nachmittagskaffee regelm\"a{\ss}ig die sch\"onsten St\"ucke Kuchen, 
die wir Kinder aussuchen und einkaufen durften ...
Eines Tages kamen in Kiel zwei ,,Wanderv\"ogel'' zu uns, beide in kurzen Hosen, Professoren der Mathematik, welche die Ostsee entlang gewandert waren. Es waren Reidemeister aus
K\"onigsberg und Rademacher aus Breslau. Mit dem letzteren entwickelte sich eine n\"ahere Beziehung. Beide, Vater und er, hielten an ihren Universit\"aten Vorlesungen ,,f\"ur H\"orer
aller Fakult\"aten'', in denen sie versuchten, die Mathematik f\"ur Laien verst\"andlich darzustellen ... Rademacher schlug vor, die besten dieser Vorlesungen auszusuchen, 
und in Buchform zu ver\"offentlichen. Vater stimmte zu, obwohl er dies lieber alleine gemacht h\"atte, und ,,Von Zahlen und Figuren, Proben mathematischen Denkens f\"ur Liebhaber
der Mathematik'', angefangen in Kiel, erschien im Jahr 1930 ... Schon wenn man das Vorwort liest, erkennt man den pers\"onlichen Stempel meines Vaters, sagen wir, seinen wunderbar 
musikalischen Stil. W\"ahrend der Arbeit kam es h\"aufig vor, da{\ss} sich die Autoren nicht einig waren, ob ein Kapitel wirklich allgemeinverst\"andlich sei. Da wurde ich eingeschaltet.}\\

\noindent Uri Toeplitz starb 2006. Auch sein Sohn Gideon war Musiker und leitete viele Jahre lang als Manager das Pittsburg Symphony Orchestra. 
Otto Toeplitz selbst hat seine Musikalit\"at auch in seinen mathematischen Werken ausgedr\"uckt, wie Uri Toeplitz und auch Eva Wohl beide betonen. \\

\noindent Max Born, der sp\"atere Physik-Nobelpreistr\"ager, schrieb 1940 einen Nachruf auf Otto Toeplitz in Nature \cite{born}. 
Die algebraische Seite von Toeplitz in der Tradition von Hilbert bringt er darin deutlich zum Ausdruck: \\

\noindent {\it Professor Otto Toeplitz, who died on March 15 in Jerusalem, was one of that rare type of scholars who combine with a high standard of specialized research
a deep interest in the history of their science and its bearing on general questions ...
Toeplitz's mathematical interest was wide and covered all branches of research, but was deeply rooted in algebra. He liked to consider
analysis as an algebra of an infinite number of variables, an example of which was given by Hilbert's treatment of integral equations as special cases of linear equations
and quadratic forms of infinite variables. } \\

\noindent Toeplitz lebte auch fort in seinen zahlreichen Sch\"ulern: Charlotte Schroeckh (Kiel 1917), August Pr\"uss (Kiel 1918), Ulrich John (Kiel 1922), 
Hans Hansen (Kiel 1923), Robert Schmidt (Kiel 1923), Anna Stein (Kiel 1927), Walter Stein (Kiel 1930), Helmut Ulm (Bonn 1933), Anton Weber (Bonn 1934),
Marie-Luise Schluckebier (Bonn 1935), Hans Schwerdtfeger (Bonn 1935), Dora Reimann (Bonn 1935), Arthur Donald Steele (Bonn 1936), 
Elisabeth Hagemann (Bonn 1937) und Bruno Ritzdorff (Bonn 1938). Diese Liste mit bemerkenswert vielen Frauen ist vermutlich unvollst\"andig. 
Sie wurde mit Hilfe des Doktorandenarchivs von Renate Tobies bei der DMV (http://dmv.mathematik.de) und der Sch\"ulerkartothek 
von Toeplitz und Bessel-Hagen \cite[S. 403]{neuenschwander} erstellt.

\section*{Breslau, Kiel und Bonn: Biographisches}

Toeplitz wurde am 1. August 1881 in Breslau geboren und wuchs dort auf. 
Sein Vater Emil, und sein Gro{\ss}vater Julius waren beide Mathematiklehrer. 
Seine Mutter Pauline (eine Cousine von Emil) starb an Tuberkulose als Toeplitz 11 Jahre alt war. 
Er wuchs in strenger Erziehung als Einzelkind im Haushalt des Vaters Emil auf, der am gleichen Johannes Gymnasium in Breslau Lehrer war, auf das Toeplitz ging. \\

\noindent Toeplitz studierte dann auch in Breslau Mathematik. Sein Doktorvater war Jakob Rosanes und Toeplitz promovierte 
1905 mit der Dissertation  ,,\"Uber Systeme von Formen, deren Funktionaldeterminante identisch verschwindet'' im Gebiet der algebraischen Geometrie.  
Lebenslange Freundschaften verbanden Toeplitz aus der Breslauer Zeit mit Max Born, Richard Courant und Ernst Hellinger. \\

\noindent 1906 wechselte Toeplitz nach G\"ottingen. Dort wurde er Sch\"uler und Mitarbeiter von David Hilbert, der zu dieser Zeit \"uber Integralgleichungen arbeitete, einem 
seiner Schwerpunkte nach den Arbeiten zur Invariantentheorie, zu den Grundlagen der Geometrie und zur Zahlentheorie. 
Otto Toeplitz setzte Hilbert ein Denkmal mit seinem Aufsatz ,,Der Algebraiker Hilbert'' \cite{toeplitz7}.
In diesem Artikel setzt sich Toeplitz mit dem Werk Hilberts auseinander und stellt interessanterweise seine eigene Forschung in eine historische Tradition mit Hilberts algebraischen Arbeiten. 
Dar\"uber berichten wir weiter unten. Das Verh\"altnis von Hilbert und Toeplitz war sehr freundschaftlich, die Familien kannten sich und es gab auch in den Folgejahren noch 
Briefwechsel \cite{toeplitz11,toeplitz15}. 
Franz Hilbert, der Sohn von David und K\"athe Hilbert, erhielt offenbar Nachhilfe von Toeplitz (ebenso \"ubrigens von Courant \cite[S. 20]{reid1}), wie aus einer Postkarte aus dem Nachlass 
\cite{toeplitz11} hervorgeht. \\

\noindent Ein enges Verh\"altnis existierte auch zwischen Felix Klein und Otto Toeplitz. Dabei drehte es sich unter anderem um Fragen der Hochschuldidaktik. Klein besch\"aftigte sich schon seit 
Ende des 19. Jahrhunderts mit didaktischen Fragen und der Reform der Schulen \cite{MU}. Kleins Stil und Ideenwelt mit ihren konkreten Objekten unterschied sich deutlich 
von der abstrakteren Welt von Hilbert. Hierzu ein Zitat von Behnke \cite[S. 3]{behnke2}; \\

\noindent {\it Aber Toeplitz' Wirken ist ganz und gar nicht auf diese Forschung beschr\"ankt. Dazu war er ein zu vielseitiger Geist, und in G\"ottingen hat ihn nicht nur 
Hilbert beeinflu{\ss}t. Felix Klein war ihm auch ein eindrucksvolles Vorbild und hat ihn veranla{\ss}t, sich auch fr\"uhzeitig mit Fragen des Unterrichts zu befassen. Klein scheint dem 
jungen G\"ottinger Dozenten schon fr\"uh viel zugetraut zu haben. Er sprach mit ihm viel \"uber die Didaktik der Anf\"angervorlesung, und wir k\"onnen uns lebhaft vorstellen, 
wie Toeplitz den Geist der Hilbertschen Schule den klassisch-anschaulichen Auffassungen von Klein (der ja nicht einmal den Begriff der abstrakten Gruppe zulassen wollte) gegen\"uber
verteidigte.} \\

\noindent In der G\"ottinger Zeit war Toeplitz auch in Kontakt mit Hermann Minkowski \cite[S. 1]{behnke2} und Hermann Weyl \cite[S. 25]{reid1}, die Distanz zu beiden war aber 
gr\"o{\ss}er als zu Hilbert und Klein. 
Im Jahr 1913 wechselte Toeplitz als au{\ss}erordentlicher Professor nach Kiel. Aufgrund von Verz\"ogerungen wurde er erst einige Jahre sp\"ater, wie versprochen, Ordinarius
als Nachfolger von Leo Pochhammer. Zeitweise war Toeplitz Dekan in Kiel und ma{\ss}geblich verantwortlich f\"ur einige Berufungen. In seiner Kieler Zeit waren auch 
Helmut Hasse als Privatdozent sowie Julius Stenzel, Heinrich Scholz und Ernst Steinitz in Kiel. Ein Versuch, Karl Jaspers aus Heidelberg nach Kiel zu holen, scheiterte. 
Anscheinend hatte es dabei eine antisemitische Reaktion innerhalb der Fakult\"at 
gegeben, die auf die j\"udische Frau von Jaspers zielte, siehe den Beitrag von Uri Toeplitz in \cite[S. 28]{bms143}. Jaspers und Toeplitz f\"uhrten einen Briefwechsel, 
der heute in Marbach untergebracht ist, ebenso existiert ein umfangreicher Briefwechsel zwischen Heinrich Behnke und Karl Jaspers \cite{hartmann3}. 
Jaspers widmete Toeplitz die erste Auflage seines Buches ,,Die Idee der Universit\"at''. Toeplitz arbeitete mit Ernst Hellinger mehrere Jahre an 
dem langen \"Ubersichtsband \cite{toeplitz3} \"uber Integralgleichungen, der in der Enzyklop\"adie der mathematischen Wissenschaften erschien.
\"Uber das Verh\"altnis zu Ernst Hellinger schreibt Uri Toeplitz \cite[S. 49]{uri}: \\

\noindent {\it ,,Onkel Henga'', wie wir Kinder ihn nannten, war Junggeselle und besonders nett zu uns Kindern. Wir st\"orten aber z.B., wenn die Erwachsenen sich bei Tisch
unterhalten wollten. So setzte er uns ein Schweigegeld aus, und das klappte ... Hellinger spendierte zum Nachmittagskaffee regelm\"a{\ss}ig die sch\"onsten St\"ucke Kuchen, die wir
Kinder aussuchen und einkaufen durften; das gab es bei uns nur, wenn er da war. Sp\"ater, zwischen 1934 und 1936, als ich in Frankfurt lebte, hat er mich regelm\"a{\ss}ig zum 
Mittagessen eingeladen.}  \\

\noindent Ernst Hellinger veranstaltete zusammen mit Max Dehn, Carl Ludwig Siegel und Paul Epstein ein historisches Seminar in Frankfurt \cite{siegel}. 
Toeplitz hatte durch ihn somit Beziehungen zu diesem Frankfurter Seminar, wie sie auch bei Behnke \cite[S. 4]{behnke2} erw\"ahnt werden. Hellinger  
verbrachte einige Zeit in einem Konzentrationslager, bis er in die USA auswandern konnte. Er bekam 1939 eine Stelle als Lecturer 
an der Northwestern University in Evanston bei Chicago und starb dort 1950 als emeritierter Professor. \\

\noindent In dieser Kieler Zeit begann auch Toeplitz Einsatz f\"ur die Lehrerausbildung \cite{toeplitz8}. 
Dazu richtete er ein mathematisch-didaktisches Kolloquium ein \cite[S. 170]{hartmann1} und k\"ummerte sich sowohl um die Fortbildung der Lehrer 
aus dem Umland und als auch um die Ausbildung der Referendare. Bei diesen Aktivit\"aten ist nat\"urlich der starke Einfluss von Felix Klein 
zu erw\"ahnen, der die sogenannte Kleinsche Reform des Unterrichts und die Ferienkurse f\"ur Lehrer begr\"undet hatte. 
Einige didaktische Einsichten aus diesen Veranstaltungen publizierte Toeplitz in den ersten Ausgaben der Semesterberichte, in denen noch 
Unterrichtskonzepte f\"ur Lehrer behandelt wurden. So besprach er dort zum Beispiel ,,eingekleidete Gleichungen'' oder die ,,Erneuerung'' 
der Schulgeometrie \cite[S. 60-62]{bms319}. \\

\noindent In Kiel hatte Toeplitz auch intensive Kontakte zu Heinrich Scholz und Julius Stenzel, siehe \cite[S. 4]{behnke2}. Zusammen wurden Seminare 
zur Mathematik der Griechen abgehalten. Mehrere Jahre sp\"ater in der Bonner Zeit gr\"undete Toeplitz zusammen mit Julius Stenzel und Otto Neugebauer die Zeitschrift 
,,Quellen und Studien zur Geschichte der Mathematik, Astronomie und Physik'' \cite{quellen}. K\"othe \cite[S. 14]{bms143} berichtet \"uber Wolfgang Franz, 
der das Kieler Seminar besuchte, und dort eine Diskussion \"uber das fehlende Konzept von Irrationalzahlen bei den Griechen miterlebte. \\

\noindent Im Jahr 1927 erging an Toeplitz der Ruf nach Bonn. Als Nachfolger von Eduard Study trat er die Stelle im Sommersemester 1928 an. 
Toeplitz und seine Familie erlebten einen Aufschwung durch das Ankommen in Bonn, Uri Toeplitz nennt den Ruf sogar eine ,,Erl\"osung'' nach den 
Kieler Jahren \cite[S. 28]{bms143}, die f\"ur die Familie wohl durch Provinzialit\"at und schlechte Erlebnisse, wie der Episode mit Jaspers Frau, gepr\"agt waren, 
und betont auch an anderer Stelle \cite[S. 54]{uri}, dass ihm die rheinische Lebensart mehr entgegenkam. \\

\noindent Im Jahr 1930 erschien das Buch ,,Von Zahlen und Figuren'' \cite{toeplitz2} von Hans Rademacher und Otto Toeplitz, das in der Kieler Zeit begonnen wurde. Es ist 
in viele Sprachen \"ubersetzt worden. Max Born schreibt dar\"uber in \cite{born}:\\

\noindent {\it But Toeplitz's general attitude to mathematics, which he preferred to consider more as an art than as a science, is more clearly visible
in his quasi-popular book ''Von Zahlen und Figuren'' written with Rademacher in 1930, which seems to me a masterpiece of that class of scientific literature
which attempts to instruct a wider public in the fundamental ideas of science. It is not easy, but certainly fascinating reading.} \\

\noindent Heinrich Behnke bemerkt zu diesem Buch in \cite[S. 6]{behnke2}:\\

\noindent {\it Jede der 30 Nummern dieses Buches ist in ihrer Abgeschlossenheit und Abgewogenheit ein Kunstwerk f\"ur sich. Alles war in Vortr\"agen 
vor einem gr\"o{\ss}eren Publikum auf seine Eignung durchgeprobt. So ist dieses kleine B\"uchlein viel mehr als es scheint, und bildet eine 
gl\"anzende Einf\"uhrung in heutige mathematische Probleme f\"ur solche Leser, die, ohne die Zeit aufzubringen, umfangreiche mathematische Theorien zu erlernen, 
in das Wesen der Mathematik eindringen wollen. Alle anderen popul\"aren Werke der letzten Jahrzehnte \"ubertrifft es weit an p\"adagogischem Geschick und an 
sicherer Zielsetzung. Die ganze Darstellung wirkt wie ein Ausschnitt aus klassischen Werken.} \\

\noindent 
Wie in \cite[S. 22]{bms319} berichtet wird, zeigte Felix Hausdorff bereits im Vorfeld der Berufung gro{\ss}es Interesse an Toeplitz und 
es gab einen sehr freundschaftlichen Kontakt zur Familie Toeplitz. Uri Toeplitz schreibt in \cite[S. 26]{bms143}: \\

\noindent {\it Unsere Familien waren befreundet. Er war sehr vielseitig und hat, unter anderem, ein viel aufgef\"uhrtes Lustspiel geschrieben. Er war auch sehr musikalisch, 
und ich durfte mit ihm vierh\"andig spielen. } \\

\noindent Israel Gohberg berichtet in \cite[S. 34]{bms143} \"uber die mathematische Zusammenarbeit von Hausdorff und Toeplitz \"uber das Spektrum von Operatoren, 
was sich vermutlich auf das sogenannte Toeplitz-Hausdorff Theorem bezieht. \\

\noindent Toeplitz entwickelte in dieser Zeit auch sein Interesse in Mathematikgeschichte in Zusammenarbeit mit Erich Bessel-Hagen und Oskar Becker weiter.  
Mit seinem Sch\"uler Gottfried K\"othe bekam er neue Ideen in der Forschung \cite{toeplitz9}. Die hohen Studentenzahlen in Bonn veranlassten ihn,  
weitere Ideen in der Hochschuldidaktik der Anf\"angervorlesungen zu entwickeln. 
Aus diesen Quellen speiste sich auch seine ,,genetischen Methode'', die in einem Buchmanuskript \cite{toeplitz1} zu einer
Anf\"angervorlesung zur Infinitesimalrechnung umgesetzt wurde, welches posthum Gottfried K\"othe herausgab.  
In den folgenden Abschnitten berichten wir ausf\"uhrlicher \"uber die historischen und didaktischen Ans\"atze von Toeplitz sowie seine mathematischen Arbeiten.

\section*{Die Zeit in Deutschland nach 1933 und in Jerusalem}

Ab 1933 wurden Juden mit Verweis auf das ,,Gesetz zur Wiederherstellung des Berufsbeamtentums'' aus dem Beamtendienst entfernt.
Toeplitz konnte noch bis 1935 vor den ,,N\"urnberger Gesetzen'' im Amt bleiben, da er bereits vor dem 1. Weltkrieg
Beamter war, verlor aber die Pr\"ufungsberechtigung. 
Otto Toeplitz dr\"uckte seine Gef\"uhle \"uber die Zeit kurz nach 1933 in einem Brief vom 22. Januar 1934 an David Hilbert aus \cite{toeplitz15}: \\

\noindent{\it ... und da Klein nicht mehr ist, werden Sie der gewichtigste Zeuge von diesem St\"uck Geschichte sein, das Sie durch 35 Jahre miterlebt haben. Es war mir 
bisher verg\"onnt, dass ich meine mathematische Arbeit fortsetzen konnte. Meine Sch\"uler sind mir treu geblieben und arbeiten weiter mit mir mit, 
soweit der Dienst ihnen irgend Zeit dazu l\"asst. Ich glaube, diese jungen Menschen k\"ampfen um den Ausgleich ihrer Pflichten und Ziele nicht minder, als es diese Zeit auch mir 
auferlegt, widerstreitende Empindungen in mir zu balancieren. Fr\"uher habe ich stets versucht, alles, was deutsch und alles, was j\"udisch in mir war, 
hunderprozentig zu bejahen und zu einer fruchtbaren Synthese mit einander zu bringen; und ich habe an der M\"oglichkeit davon nie gezweifelt und sehe auch 
heute keinen Grund, meine Ansicht dar\"uber zu \"andern. Auch heute verleugne ich nichts von dem, was in mir deutsch ist und nichts von dem, was in mir j\"udisch ist.
Aber heute ist statt der Synthese der Kampf der beiden Momente gegen einander getreten, und das ist es, was soviel von den Kr\"aften auffrisst, die man gerne einem
besseren Zweck widmen m\"ochte.}\\

\noindent Uri Toeplitz beschreibt in \cite[S. 28]{bms143} auch diese Jahre. Nach dem erfolgreichen Ruf und dem Eintreffen in Bonn vergingen nur 
5 Jahre, in denen Toeplitz angemessen arbeiten konnte. Zitieren wir daraus: \\

\noindent{\it Es kam das Jahr 1933. Als ,,Altbeamter'' - vor 1914 im Dienst - wurde er nicht gleich entlassen. Das geschah erst mit den ,,N\"urnberger Gesetzen'' des
Jahres 1935. Er wurde pensioniert, nicht einmal emeritiert ... T\"aglich kamen erniedrigende Probleme, wie z.B. der Befehl, von nun an die Vorlesungen mit dem deutschen Gru{\ss} 
zu beginnen ... Allm\"ahlich r\"uckten die Kollegen von ihm ab. Erst besuchte man sich nicht, dann gr\"u{\ss}te man nicht mehr. Er konnte nicht mehr in die Universit\"at, 
in die Bibliothek gehen ... Doch fand mein Vater auch eine, seine positive Antwort auf die Ereignisse und die wachsende Verfolgung. 
Er stellte seine Dienste der Bonner j\"udischen Gemeinde zur Verf\"ugung. Als Sachverst\"andiger in Fragen der Erziehung \"ubernahm er die Aufgabe, die j\"udischen Kinder vor
der nun feindlichen Umgebung zu bewahren und eine j\"udische Schule zu gr\"unden ... F\"ur Jahre war er mit der Schule verbunden. Er wurde auch Hochschuldezernent der 
,,Reichsvertretung der deutschen Juden''. Er unternahm es, begabte Studenten auszuw\"ahlen und auf Stipendien in Ausland zu schicken ... Er schrieb Artikel \"uber j\"udische Erziehung 
und reiste viel herum, besonders nach Berlin.}\\

\noindent Es handelte sich um die private j\"udische Volksschule in der Koblenzer Strasse 32, die 1935-1938 von Hans Herbert Hammerstein geleitet wurde \cite[S. 41]{wohl}.
Erna Toeplitz beschreibt diese Schule ebenfalls in \cite[S. 45]{bms319}: \\

\noindent {\it Schon im August \"uberbrachte Rabbiner Levy meinem Mann die Bitte des Vorstandes, er m\"ochte in einer Gemeinde-Versammlung eine Rede
\"uber die Gr\"undung einer j\"udischen Volksschule in Bonn halten, die von Samuel und Weiss ausging. Mit der ihm eigenen Intensit\"at studierte mein Mann die Fragen der \"ubrigen 
Schulen im Rheinland -- nur in Bonn und D\"usseldorf gab es keine -- die finanziellen Fragen und das Lehrer-Angebot, und am 16.10.33 fand im Gemeindehaus die Gr\"undungsversammlung 
der Schule statt. Sofort wurden 52 Kinder aus Bonn und benachbarten Gemeinden wie Beuel, Wesseling, Hersel etc. angemeldet. }\\

\noindent Es gibt einen Brief von Toeplitz an Helmut Ulm vom 18. November 1935 \cite{uri,bms143}, der ebenso die Zeit 1933-1935 beschreibt: \\

\noindent {\it Die Zeit war f\"ur mich -- abgesehen von einer sehr h\"a{\ss}lichen Episode -- gar nicht so aufregend. Da{\ss} ich dieses Amt, 
wie es jetzt war, nicht mehr weiterzuf\"uhren brauche, ist f\"ur mich nahezu eine Befreiung. Obgleich mir nie was Unangenehmes zugestossen ist in diesen zwei Jahren, 
war diese st\"andige Bewu{\ss}theit, dieses Abhandenkommen jeder Naivit\"at, eine Belastung, die mir immer erst dann ganz bewu{\ss}t geworden ist, wenn ich davon 
einmal befreit war, und so auch jetzt.} \\

\noindent Im August 1935 schrieb Toeplitz an Courant \cite[S. 79]{RSS}:\\

\noindent {\it Denn dies ist meine Auffassung: wir m\"ussen die Stellen, auf denen
man uns l\"a{\ss}t, bis zum letzten Augenblick halten, nicht als ob eine
Besserung in Sicht w\"are -- ausgeschlossen -- sondern weil wir sonst
in irgend einer Form der allgemeinen Judenheit zur Last fallen,
mindestens einem anderen die Stellen wegnehmen. Ich betrachte
es als ein Opfer, da{\ss} ich der Judenheit bringe, auf diesem Posten
auszuhalten.}\\

\noindent Otto Toeplitz war mit einigen Bonner Kollegen aus anderen Wissenschaften befreundet. Da war zum Beispiel der Geologe Hans Cloos und 
seine Familie. Cloos war 1926-1951 Direktor des Geologisch-Pal\"aontologischen Instituts in Bonn. Zuvor hatte er in Marburg und Breslau gearbeitet. 
Cloos war ein dezidierter Gegner der Nationalsozialisten. Er hatte 1933 auf einer Reise nach Afrika den Sohn Walter Toeplitz besucht, der 1933 nach 
S\"udafrika ausgewandert war. Am Tag vor der Reichskristallnacht brachte er Otto Toeplitz nach Aachen in Sicherheit, damit er dem Konzentrationslager entgehen konnte. 
Nach dem Krieg wurde Cloos von den Besatzungsm\"achten f\"ur den Wiederaufbau gebraucht und sogar als B\"urgermeister von Bonn ins Spiel gebracht \cite[S. 133]{wohl}. 
Paul Ernst Kahle, ein Bonner Orientalist, und seine Frau Marie haben ebenfalls eine besondere Rolle als Helfer in der Zeit der Verfolgung gespielt, 
und beide haben dies in Berichten dokumentiert \cite{kahle}. \\

\noindent Ein Artikel mit dem Titel ,,Was macht der j\"udische Abiturient ?'' erschien in der Zeitung des Central-Vereins \cite{toeplitz10}. Darin gibt Toeplitz
Ratschl\"age zur Berufswahl von j\"udischen Abiturienten. Dieser Artikel vom M\"arz 1937 strahlt noch einen gewissen Optimismus aus, wie Toeplitz selbst im letzten Satz betont, jedoch 
deutlich und vielsagend als ,,eingeschr\"ankten Optimismus`` benennt. 
Er konzentrierte sich beim Inhalt auf Jungen, wie die folgende kuriose Passage mit Kommentar der Redaktion unterstreicht: \\

\noindent {\it M\"adchen sollen unter den heutigen Umst\"anden jedes Studium, dessen Ziel eine produktive wissenschaftliche Bet\"atigung ist, unterlassen. (Wir k\"onnen der Meinung 
des Verfassers in der von ihm gew\"ahlten apodiktischen Form nicht ganz beipflichten. Hervorragend begabte M\"adchen sollten auch heute den Wettbewerb mit m\"annlichen 
Bewerbern aufnehmen (Die Schriftleitung).)} \\

\noindent Otto Toeplitz hat in dieser Zeit, wie man sieht, nicht aufgegeben und vermutlich auch bis 1937 mathematisch weiter gearbeitet, wie K\"othe in \cite[S. 22]{bms143} berichtet. 
Im Sommer 1938 besuchte er Pal\"astina auf Einladung von Uri und trug dort unter anderem an der Jerusalemer Universit\"at vor. Von Seiten der Administration 
wurde ihm eine Stelle in Aussicht gestellt \cite[S. 113]{uri}. 
In dieser Zeit muss wohl die Entscheidung gefallen sein, endg\"ultig nach Pal\"astina auszuwandern. Andere Reisepl\"ane in die USA
werden von Richard Courant \cite[S. 213-214]{reid1} und etwas ausf\"uhrlicher von Uri Toeplitz \cite[S. 119]{uri} geschildert, 
Es scheint, dass die Auswanderung nach Jerusalem offenbar auch unsicher war. 
Nach der R\"uckkehr nach Deutschland versch\"arfte sich der Antisemitismus in Deutschland noch mehr. Die Hilfe von Hans Cloos 
w\"ahrend der Reichskristallnacht war wom\"oglich lebensrettend. Uri Toeplitz berichtet in \cite[S. 30]{bms143} 
von einer Vorladung durch die Gestapo im Winter 1938/39, die die Gesundheit des Vaters weiter besch\"adigte.\\

\noindent In einem Brief an Heinrich Behnke vom 1. Januar 1939 beschreibt Toeplitz die Zeit des Wartens vor der Abreise nach Jerusalem \cite{toeplitz11}: \\ 

\noindent {\it Meine Sache liegt z.Z. in den H\"anden der engl. Regierung zu Jerusalem. Es sind bange Wochen des Wartens, bange auch deshalb, weil ich indessen meine hiesigen B\"ucher
abbrechen muss, um nach Eintreffen des Zertifikats nicht zuviel Zeit mit den \"Amtern zu verlieren. Die \"Amter sind bisher alle sachlich und h\"oflich. } \\

\noindent In diesem Brief k\"undigte er auch einen Abschiedsbesuch in M\"unster an, der tats\"achlich Anfang Februar stattfand, wie Gottfried K\"othe in \cite[S. 23]{bms143} 
eindringlich beschreibt: \\

\noindent {\it Wenige Tage vor seiner Ausreise kam Toeplitz noch nach M\"unster, wo Behnke und ich ihn am Bahnhof abholten und mit ihm in ein naheliegendes Caf\'e gingen.
Toeplitz war gekommen, um mit uns, seinen Freunden, Abschied zu nehmen. Ich sehe ihn noch vor mit, aus dem abfahrenden Zug uns zuwinkend, ernst und traurig. Wir ahnten, da{\ss} es
ein Abschied f\"ur immer war. } \\

\noindent Erna und Otto Toeplitz verlie{\ss}en Deutschland im Februar 1939, getrennt \"uber die Schweiz \cite[S. 74-75]{wohl}. Es existieren einige Briefe aus dem Jahr 1939, 
die nochmals die menschlichen Qualit\"aten von Otto Toeplitz beleuchten. So gibt es zwei Briefe von Polya aus dem Mai 1939, in denen sich beide \"uber Schur und Hausdorff 
austauschen. Es ist bekannt, dass Toeplitz genau Buch f\"uhrte (genauer gesagt Karteikarten, deren Verbleib mir nicht bekannt ist) 
\"uber die Schicksale und Selbstmorde von j\"udischen Kollegen. In einem Brief an Max Born schreibt er 1939 \cite[S. 84]{wohl}: \\

\noindent {\it Am 1.2.39 habe ich in Basel die Grenze \"uberschritten und bin seit dem 10.2.39 in der Verwaltung der hiesigen Universit\"at als wissenschaftlicher Berater t\"atig ...
Von diesem Hexenkessel, Deutschland, will ich nicht viel schreiben. Derjenige, der 1933 mitgemacht hat, wie Du, hat noch lange keine Vorstellung von 1938/1939. 
Worte k\"onnen nicht wiedergeben, 
in welche Verzerrung dieses vortreffliche Volk geraten ist.}\\

\noindent Im Rest des Briefes spricht Toeplitz \"uber seine neue T\"atigkeit, die nicht leicht sei, trotz der Unterst\"utzung durch den F\"orderer 
Salman Schocken, den Chef der Verwaltung der Hebr\"aischen Universit\"at zu Jerusalem.
Nach einem Jahr in Jerusalem starb Otto Toeplitz am 15. Februar 1940 mit 58 Jahren in Jerusalem, nachdem seine Gesundheit sich wieder
verschlechtert hatte. Er wurde auf dem \"Olberg beerdigt. Zu seinen Ehren h\"angt seit 1995 \cite{bms319} 
eine Gedenktafel im Mathematischen Institut der Universit\"at Bonn. 


\section*{Der Algebraiker Hilbert}

Otto Toeplitz schrieb 1922 einen Artikel \"uber David Hilbert mit dem Titel ,,Der Algebraiker Hilbert'' \cite{toeplitz7}.
Aus dem ganzen Artikel spricht die Bewunderung von Toeplitz f\"ur Hilbert.
Nachdem er zun\"achst die Arbeiten Hilberts zur Zahlentheorie als den Kern von Hilberts Leistungen w\"urdigt, geht er auf dessen Leistungen in der Algebra ein: \\

\noindent {\it Von dem Algebraiker Hilbert zu erz\"ahlen, so zu erz\"ahlen, dass der Naturwissenschaftler ein Bild erh\"alt, und selbst derjenige einen Eindruck, der
dessen mathematische Exerzitien bei der Gleichung der Ellipse aufgeh\"ort haben, ist vermessen ... Indessen Hilbert nimmt zur Algebra eine besondere Stellung ein.
Seine Methoden sind von einer ganz anderen Art wie die eines Kronecker, Frobenius, Gordan; es sind Methoden, auf die bei ihrem Auftreten keiner der Beteiligten gefasst war ... 
Und dieser Schimmer einer M\"oglichkeit, von dem Algebraiker Hilbert zu erz\"ahlen, soll deshalb nicht ungenutzt bleiben, weil sich in seinen algebraischen Forschungen
ein Wesenszug offenbart, der sich von dem Hintergrunde der Algebra plastischer abhebt als von dem seiner vielen Arbeitsgebiete, und der ein Grundpfeiler ist f\"ur die Analyse
des mathematischen Menschen Hilbert.} \\

\noindent In der Folge geht Toeplitz ausf\"uhrlich auf die Leistungen Hilberts in der Invariantentheorie ein. Im letzten Teil des Artikels schl\"agt er allerdings die 
Br\"ucke zum Gebiet der Integralgleichungen und der unendlichen Matrizen, Toeplitz eigenem Arbeitsgebiet: \\

\noindent {\it Aber Hilbert hat noch einmal eine umfassende algebraische Leistung vollbracht, in der Theorie der Integralgleichungen und unendlichvielen Variablen, deren analytische Seite 
an anderer Stelle dieses Heftes zur Geltung kommt ... Das einigende Band f\"ur alle diesen Theorien ist ein algebraischer Gedanke ... So hat Hilbert in seiner Theorie der
unendlichvielen Variablen ein gro{\ss}es Haus errichtet, in dem viele einzelne Untersuchungen ger\"aumige Wohnungen gefunden haben; ... So steht der Algebraiker Hilbert in
ausgepr\"agter Eigenart vor uns, wie nicht ein jeder Mathematiker. }\\

\noindent Aus dieser Beschreibung liest man die Bewunderung f\"ur Hilbert heraus, es wird aber auch deutlich, wie sehr Toeplitz seine eigene Forschung 
vor einem algebraischen Hintergrund sah.  
Im letzten Abschnitt des Artikels verbindet Toeplitz die beschriebenen Wesensz\"uge von Hilbert mit seiner Exaktheit und dessen Beurteilung der Mathematik in den 
ber\"uhmten Pariser Problemen und der Grundlagenfrage der Mathematik. An dieser Stelle verteidigt Toeplitz den formalen axiomatischen 
Standpunkt Hilbert gegen\"uber Weyl, Brouwer und anderen, die dem Konstruktivismus nahestanden \cite{reid2}. Dies ist schon eine Vorwegnahme von Teilaspekten 
des sp\"ateren Annalenstreits.

\section*{Briefe von Klein und Hilbert: J\"udische Berufungslisten }

J\"udische Mathematikprofessoren in Deutschland spielten erst seit dem 19. Jahrhundert eine Rolle, 
angefangen mit Moritz Abraham Stern, einem Sch\"uler von Gauss und dem ersten j\"udischen Ordinarius. Im Fall von 
Carl G.J. Jacobi hat Felix Klein bemerkenswerte Aussagen in diesem Zusammenhang gemacht. Nachdem er zuvor viele kleine Spitzen \"uber die 
Person von Jacobi in den Text einstreut, und die wissenschaftlichen Leistungen und die Schule Jacobis sehr hoch
bewertet, schreibt er \cite[S. 114]{klein}: \\

\noindent {\it Ehe ich die Betrachtungen \"uber Jacobi schlie{\ss}e, m\"ochte ich noch eine Tatsache erw\"ahnen, die mir unter
dem Gesichtspunkt der Charakteristik dieses Mannes als auch unter dem der Entwicklung unserer Wissenschaft nicht unwichtig erscheint. Bekanntlich
hatte das Jahr 1812 die Emanzipation der Juden in Preu{\ss}en gebracht. Jacobi ist der erste j\"udische Mathematiker, der in Deutschland eine 
f\"uhrende Stellung einnimmt. Auch hiermit steht er an der Spitze einer gro{\ss}en, f\"ur unsere Wissenschaft bedeutungsvollen Entwicklung. 
Es ist mit dieser Ma{\ss}nahme eine neues, gro{\ss}es Reservoir mathematischer Begabung f\"ur unser Land er\"offnet, dessen Kr\"afte neben dem 
durch das franz\"osische Emigrantentum gewonnenen Zuschu{\ss} sich in unserer Wissenschaft sehr bald fruchtbar erweisen. Es scheint mir durch 
solch eine Art Blutserneuerung eine starke Belebung der Wissenschaft gewonnen zu werden. }\\

\noindent David Rowe thematisiert in \cite{rowe} die Berufungsangelegenheiten um die Jahrhundertwende in das
zwanzigste Jahrhundert, und damit zusammenh\"angend die ,,Jewish Question'', besonders anhand der Person Adolf Hurwitz. 
Hurwitz Ruf nach G\"ottingen und die Rollen von Felix Klein, David Hilbert und anderen in dieser Zeit werden beleuchtet. 
F\"ur Toeplitz waren Hilbert und Klein nicht nur mathematische Vorbilder, er hat sie auch in seinen eigenen 
Berufungsaktivit\"aten als Gutachter bem\"uht. Man kann sich denken, dass Klein und Hilbert bei vielen Berufungen in der Mathematik 
entscheidenden Einfluss nehmen konnten. 
David Hilbert schrieb an Toeplitz am 8. Februar 1920 einen Brief, in dem er auf ein Berufungsverfahren in Kiel auf eine 
Professur f\"ur Algebra eingeht \cite{toeplitz11}: \\

\noindent
{\it An erster Stelle w\"urde ich Steinitz vorschlagen, der es verdient und der kommt und mit dem sie ausgezeichnet fahren 
werden (Fussnote: den ich auch rein wissenschaftlich f\"ur den erfolgreichsten Forscher unter den Genannten halte.). 
Wenn Sie einen nicht-j\"udischen Mathematiker an 2ter oder 3ter Stelle nennen wollen, so kommen Bieberbach, f\"ur den ich 
\"ubrigens keineswegs schw\"arme, und Hausdorff (Anmerkung: sic!) in Frage ... Hellinger ist ein Faulpelz und soll erst 
wieder etwas machen, ehe man ihn auf die Liste setzt, ebenso Dehn. 
\"Ubrigens werden alle in n\"achster Zeit Rufe bekommen, da ein gro{\ss}er Mangel an Mathematikern besteht.} \\

\noindent Steinitz hat die Stelle auch bekommen. Sein mathematischer Ruf begr\"undete sich auf seine Arbeiten zur Theorie 
der algebraischen K\"orpererweiterungen \cite{steinitz}. Er hatte sp\"ater auch anerkannte Leistungen in der Polyedertheorie erbracht und 
starb 1928 in Kiel. Erstaunlicherweise hatten Nachkommen der Familien Steinitz und Toeplitz in Jerusalem Kontakt, wie man aus 
Dokumenten im Internet (http://family.steinitz.net) entnehmen kann. 
Felix Klein hat in einem bemerkenswerten Brief an Toeplitz am selben Tag, dem 8. Februar 1920, auch zur selben Sache 
Stellung genommen. Wir geben nur das Ende des Briefes wieder, weil es die Art und Weise deutlich macht, mit der 
Klein agierte \cite{toeplitz11}: 

\medskip 
\noindent
{\it  Nun komme ich, um nichts zur\"uckzuhalten, auf die Frage des Antisemitismus. Sie wissen, wie ich es selbst immer gehalten habe, 
seit ich 1874 die Berufung von Gordan nach Erlangen veranla{\ss}te: mir war der einzelne Jude willkommen, indem ich voraussetzte, da{\ss}
er mit den \"ubrigen Mitgliedern der Universit\"at kooperieren werde. Aber nun haben sich im Laufe der Zeit die Gegens\"atze prinzipiell versch\"arft. 
Wir haben auf der einen Seite nicht nur ein ungeheures, der merkw\"urdigen Leistungsf\"ahigkeit entprechendes Vordr\"angen des Judentums, 
sondern das Hervorkommen der j\"udischen Solidarit\"at (welche dem Stammesgenossen auf alle Weise in erster Linie zu helfen strebt). Dazu nun
als R\"uckwirkung den starken Antisemitismus. Das Problem ist ein allgemeines, bei dem Deutschland, soweit nicht gerade die moderne \"ostliche 
Einwanderung in Betracht kommt, nur eine sekund\"are Rolle spielt. Niemand kann sagen, wie sich das Ding weiterentwickelt. Aber ich mache
darauf aufmerksam, da{\ss} die s\"amtlichen f\"unf Gelehrten, die Sie f\"ur Ihr Ordinariat in Aussicht nehmen, j\"udischen Ursprungs sind. 
Ist das eine zweckm\"a{\ss}ige Politik ? Ich nehme von 
vorneherein an, da{\ss} Sie das nicht beabsichtigt haben. Man kann auch beinahe so argumentieren: da{\ss} der an allen Universit\"aten etc. 
latent vorhandene Antisemitismus die christlichen Kandidaten so bevorzugt habe, da{\ss} nur noch j\"udische zur Verf\"ugung stehen. 
Aber ich bitte doch dar\"uber nachzudenken. Wir treiben m\"oglicherweise in Gegens\"atze hinein, die f\"ur unsere gesamten Zust\"ande unheilvoll werden k\"onnten.} \\

\noindent In unserer heutigen Zeit wirken manche dieser Worte, wohl als Abbild des ,,Zeitgeistes'', klarerweise ,,politisch inkorrekt''. 
Klein spricht den \"uberall vorhandenen Antisemitismus jedoch klar aus. Erstaunlich ist das Vorhersehen von Entwicklungen, besonders im letzten Satz. 
Uri Toeplitz geht auf diesen Brief sowie weitere beteiligte Personen wie Eduard Study und Edmund Landau ebenfalls in \cite[S. 139-141]{uri} ein.

\section*{Heinrich Behnke und die Semesterberichte}

Heinrich Behnke, der in M\"unster seine ganze Wirkung und Bedeutung entfaltete, kam aus Hamburg von Erich Hecke, wo auch Toeplitz regelm\"a{\ss}ig zu Gast war. 
Behnke schreibt in \cite[S. 48/49]{behnke1}, dass Hecke manchmal leicht abf\"allig vom ,,dicken Toeplitz'' sprach. 
Behnke erw\"ahnt darin auch, dass Toeplitz Sinn f\"ur den Unterricht ausgepr\"agter war, 
als das im Kreis der Hilbertsch\"uler \"ublich war. Erich Hecke war Sch\"uler von David Hilbert. 
Besonders interessant in Behnkes Autobiographie  \cite{behnke1} sind die Beschreibungen 
des Universit\"atslebens am Anfang des 20ten Jahrhunderts. Trotz des Titels ,,Semesterberichte'' enth\"alt sie ansonsten leider nur wenige Erinnerungen an Toeplitz. 
Nachdem Toeplitz in Kiel bereits Fortbildungsveranstaltungen f\"ur Mathematiklehrer durchgef\"uhrt hatte \cite{toeplitz8}, seine beachteten Vortr\"age \"uber die Schnittstelle 
Schule--Universit\"at \cite{toeplitz4,toeplitz5} gehalten hatte und auch h\"aufig als Gast der 1919 gegr\"undeten Hamburger Universit\"at mit Behnke in Kontakt war, 
gab es in der Bonner Zeit ab 1927 regelm\"a{\ss}ige Treffen mit Behnke und seinen Sch\"ulern. 
Gottfried K\"othe beschreibt ein Zusammentreffen von Behnke und Toeplitz 1929 in Bonn in \cite[S. 10]{bms143}: \\

\noindent {\it Als Toeplitz bei Emmy Noether anfragte, ob sie ihm einen jungen Algebraiker empfehlen k\"onnte, 
schlug sie mich vor, und ich kam im Sommer-Semester 1929 zu einem Vorstellungsbesuch nach Bonn. Ich geriet in ein gro{\ss}es Treffen 
von Toeplitz und Behnke mit ihren Sch\"ulern und machte gleich eine Rheinfahrt mit. }\\

\noindent Im Jahr 1932 gr\"undeten Behnke und Toeplitz  die Zeitschrift ,,Semesterberichte zur Pflege des Zusammenhangs von Universit\"at und Schule''.
Im Universit\"atsarchiv M\"unster befindet sich ein Geleitwort zum Erscheinen der Semesterberichte vom Juni 1932 \cite[S. 201]{hartmann2}: \\

\noindent
{\it Ich habe es immer als schmerzlich empfunden, dass seit der Aufgabe der Ferienkurse faktisch keine Beziehung mehr zwischen den Studienr\"aten der Mathematik und 
uns besteht. Das ist f\"ur den mathematischen Unterricht an Schulen und Universit\"at gleich sch\"adlich. Da andererseits infolge der weiteren Beschr\"ankung der
Mittel die Wiederaufnahme der Ferienkurse nicht m\"oglich ist, haben Herr Kollege Toeplitz aus Bonn -- der diesen Zustand ebenso stark empfindet -- und ich 
beschlossen, Semesterberichte herauszugeben. }\\
 
\noindent Die ersten Hefte der Semesterberichte begleiteten die von Behnke und Toeplitz organisierten 
,,Tagungen zur Pflege des Zusammenhangs von Universit\"at und h\"oherer Schule'', die ab 1931 in M\"unster stattfanden \cite[S. 34]{hartmann1}.  
Heinrich Behnke beschreibt die Gr\"undung der Semesterberichte auch im Nachhinein in \cite[S. 6]{behnke2}: \\

\noindent {\it 1932 wurden mit dem Verfasser dieser Zeilen trotz der gro{\ss}en Wirtschaftskrise die alten Semesterberichte in ihrer bescheidenen Form 
der blauen hektographierten Hefte (1-14) begr\"undet. Der \"au{\ss}ere Anla{\ss} daf\"ur war unser Verlangen, mit unseren fr\"uheren Studenten
auch noch in Verbindung bleiben zu k\"onnen, wenn sie zur Schule zur\"uckgekehrt waren. Auf den Gedanken, da{\ss} ein erheblicher Teil unserer uns enger verbundenen
Studenten in der Industrie Besch\"aftigung suchen sollte oder an die Hochschulen des In- und Auslandes gehen w\"urde, kam damals noch keiner von uns. 
Das Programm der Bl\"atter war im wesentlichen das der heutigen Semesterberichte (die seit 1949 bei Vandenhoeck und Ruprecht bzw. im Springer Verlag erscheinen). Doch
wer die Reihe der 14 Hefte verfolgt, die bis zum Kriege ver\"offentlicht wurden, erkennt, da{\ss} Toeplitz noch eine weitere Linie verfolgen wollte, als die Semesterberichte 
dann wirklich eingeschlagen haben. Er hatte die Absicht, in diesen Bl\"attern sich auch \"uber die Fragen des Schulunterrichts auszulassen. Doch beklagte er 
dann, da{\ss} offenbar Fachgenossen und Schulleute wenig darauf eingingen.} \\

\noindent Diese Textpassage best\"atigt die gleichberechtigte Rolle von Toeplitz bei den Semesterberichten. Toeplitz wurde ab 1935 aus der Redaktion durch Einfluss von 
Georg Hamel und Theodor Vahlen entfernt \cite[S. 193]{hartmann1}. Im Jahr 1939 wurde die Zeitschrift eingestellt. Die Gr\"unde daf\"ur waren zum einen 
finanzieller Art, aber es gibt auch Stimmen, die davon ausgehen, dass Ludwig Bieberbach die Reduktion der Fachzeitschriften in der DMV so vorangetrieben hatte, 
dass die Semesterberichte politisch unter Druck gerieten \cite[S. 209]{hartmann1}, \cite[S. 19]{remmert}. Erst nach dem Krieg wurden die Semesterberichte 1949 
unter dem neuen Namen ,,Mathematisch-Physikalische Semesterberichte'' bei Vandenhoeck und Ruprecht 
wieder von Behnke zusammen mit Wilhelm S\"u{\ss} und Walter Lietzmann herausgegeben. Sie existieren bis zum heutigen Tage, derzeit unter dem Namen 
,,Mathematische Semesterberichte'' beim Springer Verlag. \\

\noindent \"Uber das pers\"onliche Verh\"altnis zwischen Behnke und Toeplitz wissen wir nicht sehr viel, ebensowenig, ob es sich nach 1933 ver\"andert hat. 
Die erste Frau von Heinrich Behnke, Aenne Albersheim, war j\"udischer Abstammung und verstarb 1927 nach Geburt des Sohnes Hans. 
Insofern war Behnke sicherlich beim Thema Antisemitismus sensibel. Uri Toeplitz geht darauf in \cite[S. 133]{uri} ein: \\

\noindent {\it Behnke war ein h\"aufiger Gast bei uns in Bonn, immer gute Laune mit sich bringend, wie dies auch mit seiner imposanten Figur 
und Stentorstimme zusammenging. Er kam weiter zur Nazi-Zeit, obwohl er, der aus erster Ehe einen halb-j\"udischen Sohn hatte, wohl besonders vorsichtig sein musste. }\\

\noindent In Behnkes Autobiographie findet sich davon wenig. Es ist hervorzuheben, dass Behnke bis zu Toeplitz 
Abreise nach Jerusalem, ebenso wie Gottfried K\"othe, mit Toeplitz pers\"onlichen und brieflichen Kontakt hatte.
Behnke hat offenbar ab 1933 dennoch eine im Grunde \"angstliche politische Einstellung gehabt. 
Dies wird durch Peter Thullen in seinen Tageb\"uchern hervorgehoben \cite[S. 48]{thullen}, siehe auch \cite[S. 42]{hartmann1}. Volker Remmert erw\"ahnt
in \cite[S. 12]{remmert} Aktennotizen des M\"unsteraner Rektorats, die belegen, dass Behnke zwar unter Beobachtung stand, aber aufgrund seiner Verdienste einen Schutz genoss.
Es gibt einen Hinweis auf Verstimmungen zwischen Behnke und Toeplitz, den Uta Hartmann in Ihrer Dissertation \cite[S. 37]{hartmann1} aufgebracht hat. 
In einem Brief an Karl Jaspers vom 19. Oktober 1936 schreibt Toeplitz \"uber einen Vorfall, der jedoch im Unklaren bleibt:\\

\noindent {\it Es handelte sich darum, Behnke in einem kritischen Augenblick zu erziehen, wo er es wieder mal n\"otig hatte. Schwer genug in meiner Lage, aber ich wollte es schon. 
H\"atte er an Ihnen wie an allen anderen keine St\"utze gefunden, so h\"atte ich sicher mehr erreicht. Ich sehe eine Menge Arier in \"ahnlicher Lage, und B.[ehnke] mit seiner
ungemeinen und biegsamen Intelligenz m\"usste es schaffen, ebenso klar dazustehen, wie manche von diesen, die ich im Auge habe. Aber 
es r\"achen sich jetzt bei ihm einige gro{\ss}e Fehler, gegen die ich immer angek\"ampft habe ... 
Mir ist eine solche Beziehung von geringer Innerlichkeit eigentlich trauriger, als gar keine.} \\

\noindent Es ist die Frage, und bleibt offen, wie tief diese Verstimmung bei Toeplitz war, zumal die beiden Pers\"onlichkeiten sehr unterschiedlich waren. 
Umgekehrt finden in den noch existierenden Briefen von Behnke an Jaspers solche Stimmungslagen keine Erw\"ahnung \cite[S. 38]{hartmann1}.


\section*{Toeplitz und die Mathematikgeschichte} 

Bereits in Kiel hatte Toeplitz intensive Kontakte zu Heinrich Scholz und Julius Stenzel und veranstaltete mit ihnen gemeinsam 
ein mathematisch-historisches Seminar, in dem vor allem antike Mathematik besprochen wurde. 
Stenzel war ein angesehener Graezist, Scholz ein Theologe und Philosoph, der sich sp\"ater zum Logiker wandelte und 
eine Schule in M\"unster begr\"undete. In den Bonner Jahren baute Toeplitz zusammen mit
Erich Bessel-Hagen eine mathematik-historische Bibliothek auf \cite[S. 30]{bms319} und hielt auch dort ein historisches Seminar ab. 
Zu diesem Bonner Kreis um Toeplitz geh\"orte sp\"ater auch Oskar Becker, der 1931 nach Bonn berufen wurde, seine Eudoxosstudien 
dort entwickelte und in den Quellen und Studien \cite{quellen} publizierte. 
In den Semesterberichten hat Harald Boehme \"uber Oskar Becker k\"urzlich einen Artikel verfasst \cite{boehme}.  
Die Staatsexamensarbeit von Daniel Schneider \cite{schneider} behandelt ebenfalls verschiedene Aspekte dieser Person. 
Oskar Becker entwickelte in den drei{\ss}iger Jahren eine deutliche nationalsozialistische Gesinnung, die daf\"ur sorgte, dass er nach dem Krieg
von den Besatzungsm\"achten f\"ur einige Jahre in den einstweiligen Ruhestand versetzt wurde. Wie diese Ideologie mit seiner vorangegangenen 
Zusammenarbeit mit Toeplitz vereinbar ist, bleibt uns heute unverst\"andlich. \\
 
\noindent Es ist erstaunlich, dass an einigen Instituten in dieser Zeit Mathematiker gleichzeitig intensive historische Interessen 
entwickelten und Seminare dazu abhielten. 
Der Krieg hat diese Entwicklung aber wieder beendet. Es w\"are m\"oglicherweise interessant, die Ergebnisse dieser Seminare noch mehr aufzubereiten. 
Bei Toeplitz wurde oft darauf hingewiesen, beispielsweise von Max Born, dass seine Breslauer famili\"aren Wurzeln und seine humanistische 
Schulbildung die Grundlage f\"ur diese Neigung bildeten. Jedoch war sein Interesse in dieser Hinsicht weit \"uberdurchschnittlich. 
Der bemerkenswerte Kontakt zu den Philologen wurde von Toeplitz selbst als Arbeitsprogramm thematisiert \cite[S. 4]{toeplitz13}: \\

\noindent {\it Die Forschung ist hier vor den Toren des Baues, als den wir die griechische Mathematik vorstellen wollen, stehen geblieben. 
Nur ein neues System der Zusammenarbeit von Philologe und Mathematiker kann diese Tore \"offnen.} \\

\noindent Toeplitz hatte also nicht nur ein ausgepr\"agtes Interesse an mathematisch-historischen Fragen, sondern auch ein Bed\"urfnis, mit ad\"aquaten f\"acher\"ubergreifenden 
Methoden zu arbeiten. Julius Stenzel wurde 1925 nach Kiel berufen. Er hatte mit seinem Buch \cite{stenzel} zu Plato und Aristoteles die 
Arbeiten von Toeplitz \cite{toeplitz12,toeplitz13,toeplitz14} zur griechischen Mathematik beeinflusst. In diesen Artikeln begab sich Toeplitz intensiv in den Diskurs, der 
zu diesen Themen in seiner Zeit herrschte, zum Beispiel durch Oswald Spenglers ,,Der Untergang des Abendlandes'', und bewies oder widerrief Thesen, indem er genaue 
\"Ubersetzungen anforderte, um den Interpretionsspielraum kritisch zu kontrollieren. So sollten zum Beispiel Fehlinterpretationen durch Analogien mit moderneren mathematischen 
Begriffen ausgeschlossen werden. Durch diese Vorgehensweise kam er zu einer Kritik an Thesen von A. E. Taylor. In eine \"ahnliche Auseinandersetzung begab sich Toeplitz 
mit seiner Widerlegung der Kritik von Ernst Mach an der Darstellung des Archimedischen Beweises des Hebelgesetzes \cite{mach}. 
Zwei seiner Doktoranden, Walter Stein \cite{stein} und Dora Reimann \cite{reimann}, publizierten \"uber Toeplitz Kritik an Mach
in den Quellen und Studien, was einer massiven Kritik an Mach gleichkam. In der Staatsexamensarbeit von Sebastian Lange \cite{lange} wird dies ausf\"uhrlich behandelt. \\
  
\noindent Die Breite von Toeplitz Denken wird auch offenbar durch die engen Kontakte, die Toeplitz in Bonn mit Professorenkollegen aus anderen Gebieten pflegte: 
da war Hans Cloos (Geologe), Richard Thoma (Jurist), Paul Ernst Kahle (Orientalist), Leo Waibel (Geograph), Fritz Kern (Historiker) und Adolf Zycha (Rechtshistoriker) \cite[S. 28]{bms319}. 
In letzter Konsequenz hatte Toeplitz zusammen mit Julius Stenzel und Otto Neugebauer 1929 die Zeitschrift ,,Quellen und Studien zur Mathematik, Astronomie und Physik'' gegr\"undet, die
von 1931 bis 1938 beim Springer Verlag erschien. Hier ein Zitat aus dem Geleitwort zur ersten Ausgabe, das wieder die \"ubergreifenden Ideen von Toeplitz in Bezug auf Historie und Didaktik 
beleuchtet \cite[Band 1, S. 1]{quellen}: \\ 

\noindent {\it Die heute so aktuellen Bem\"uhungen um die Grundlagen der Mathematik, das damit eng zusammenh\"angende Interesse an philosophischen und didaktischen 
Problemen haben mit gutem Recht auch die Frage nach dem geschichtlichen Werdegang mehr in den Vordergrund geschoben. Wir glauben daher, den Versuch wagen zu d\"urfen, 
der Forschungen
nach den geschichtlichen Grundlagen der mathematischen Wissenschaften eine neue St\"atte zu bieten. }\\

\noindent  In den Quellen und Studien erscheinen Artikel von Erich Bessel-Hagen, Oskar Becker und Bartel Leendert van der Waarden. 
Die Zeitschrift ist die zweite, die Toeplitz gr\"undete, und bei der er seinen hohen Anspruch an die Mathematikgeschichte und Philologie 
einflussreich etablierte. Der Mitherausgeber Otto Neugebauer emigrierte 1933 nach Kopenhagen und 
Julius Stenzel starb 1935 in Halle. Die Zeitschrift lief daher aus, da Toeplitz ohnehin als 
j\"udischer Herausgeber Ablehnung erfuhr. Oskar Becker machte sich im Dezember 1938 in einem Brief an Otto Neugebauer sogar f\"ur die Absetzung von Toeplitz als Herausgeber 
stark, worauf Neugebauer aus Kopenhagen eine scharfe Antwort schrieb \cite[S. 142]{RSS} und seinen eigenen R\"ucktritt ank\"undigte: \\

\noindent {\it Sie schrieben mir, da{\ss} Sie als Nationalsozialist anscheinend eine von der meinen verschiedene Auffassung haben, trotz ihrer pers\"onlichen
Hochachtung vor Herrn Toeplitz. Ich kann darauf nur antworten, da{\ss} ich nicht im gl\"ucklichen Besitz irgendeiner ''Weltanschauung'' bin und 
daher gen\"otigt bin, mir in jedem einzelnen Fall zu \"uberlegen, was ich tun soll, ohne mich auf ein von vorneherein gegebenes Dogma zur\"uckziehen zu k\"onnen.
Diesen Nachteil dem praktischen Leben gegen\"uber wiegt vielleicht auf, da{\ss} es mir erspart bleibt, mich von den Menschen, die ich hochsch\"atze, deshalb
trennen zu m\"ussen, weil sie das Ungl\"uck haben, von anderen Menschen mi{\ss}handelt zu werden.} \\

\noindent In diesen Zeilen eines bewundernswerten Charakters steckt nicht nur die Verteidigung von Otto Toeplitz, sondern auch die kaum versteckte
Verachtung der Gedankenwelt dieses Oskar Becker, der durch seine Forderung vom Mitl\"aufer zum T\"ater wurde.

\section*{Die genetische Methode}

In einem Vortrag am 24. September 1926 vor dem Reichsverband \cite{toeplitz4} legte Otto Toeplitz in gro{\ss}er Klarheit die 
hochschuldidaktischen Probleme in der Anf\"angervorlesung ,,Infinitesimalrechnung'' dar. Er berief sich dabei auf 
eine \"ahnliche Auseinandersetzung, die auf der Tagung der Naturforscher und \"Arzte 28 Jahre fr\"uher im Zusammenhang mit
Vortr\"agen von Alfred Pringsheim und Felix Klein stattfand \cite[S. 88]{toeplitz4}:  \\

\noindent {\it Der tats\"achliche Zustand dieser Vorlesung an den deutschen Universit\"aten zeigt noch heute die gleiche bunte Mannigfaltigkeit; auf der einen Seite
die strenge Observanz, die mit einer sechsw\"ochentlichen Dedekindkur anhebt und dann aus den Eigenschaften des allgemeinen Zahl- und Funktionsbegriffs 
die konkreten Regeln des Differenzierens und Integrierens herleitet, als w\"aren sie notwendige, nat\"urliche Konsequenzen, auf der anderen Seite die 
anschauliche Richtung, die den Zauber der Differentiale walten l\"a{\ss}t und auch in der letzten Stunde der zwei Semester umspannenden Vorlesung den Nebel, 
der aus den Indivisibilien aufsteigt, nicht durch den Sonnenschein eines klaren Grenzbegriffs zerrei{\ss}t; und dazwischen die hundert 
Schattierungen von Diagonalen, die man zwischen zwei zueinander senkrechten Ideenrichtungen einzuschalten vermag.}  \\

\noindent Toeplitz versucht in \cite{toeplitz4}, dieses Problem zu erkl\"aren. Er f\"uhrt dazu drei sogenannte Momente an: Das erste Moment 
ist die Kluft zwischen Exaktheit und Anschaulichkeit, die besonders in der Mathematik steckt, seit der Anspruch der Exaktheit dort so wichtig geworden ist. 
Toeplitz f\"uhrt Weierstra{\ss} als Vorreiter der mathematischen Strenge an. Das zweite Moment ist in der Rolle der Anf\"angervorlesung 
zwischen Schule und weiterf\"uhrenden Kursvorlesungen begr\"undet. Dies zeigt sich in der weiten Spanne zwischen praktischen Rechen- und Beweistechniken und 
z.B. der Epsilontik, d.h. dem strengen Grenzwertbegriff. Dieses inhaltliche Problem wird \"uberlagert von der Inhomogenit\"at an Begabung 
der H\"orer einer solchen Vorlesung. Er trifft die Entscheidung, dass die 45 \% der mittelguten H\"orer (ohne die 5 \% der absolut Begabten und die 50 \% der Unbegabten)
im Mittelpunkt des Interesses stehen m\"ussen. Das dritte Moment ist die \"Uberschneidung mit der Schule, heute Schnittstelle genannt. Die Art und Weise wie in der Schule
Differentialrechnung gelehrt werde, sei auf das Gros der Sch\"uler abgestellt, so dass die Schule das Problem nicht unbedingt verbessere. Toeplitz schl\"agt in der Folge
eine L\"osung dieser Probleme dar, die sich die ,,genetische Methode'' nennt. Diese Methode stellt auf eine motivierende Herangehensweise an Begriffe ab, \"ahnlich der 
historischen Entstehungsgeschichte. Er differenziert aber klar von einer ,,historischen Methode'', so dass es hier nicht darum geht, Begriffe in ihrer
historischen Entwicklung zu beschreiben, sondern das Herangehen in der Entstehung (Genese) zu verbessern. Er sieht das nicht als neue Idee, 
sondern erinnert wieder an die Diskussion zwischen Klein und Pringsheim, wo auf das ,,biogenetische Grundgesetz'' hingewiesen wurde. 
Es folgt ein Beispiel zur genetischen Methode 
an Hand der Reihenkonvergenz, bei dem historische Beispiele von Reihen (Mercator, Newton, Euler, Bernoullis) einer Konvergenzbetrachtung vorangestellt werden. 
Otto Toeplitz hat in Vorlesungen und mit Hilfe seiner Arbeitsgruppe an einem tats\"achlichen Umsetzungskonzept gearbeitet, das posthum von Gottfried K\"othe 
publiziert wurde \cite{toeplitz1}.  Die Folgen aus den drei Momenten werden schlie{\ss}lich in drei Thesen zusammengefasst: Zum ersten sei die Integralrechnung vor der Differentialrechnung 
zu behandeln, was durch die Griechen (Archimedes) und Dedekind, Fermat und Cavalieri untermauert wird. Die zweiten These stellt den Hauptsatz der Differential- und
Integralrechnung in den Mittelpunkt. Anhand der Personen Fermat, Barrow, Newton und Leibniz wird der Unterschied zwischen Genesis und Historie 
f\"ur Toeplitz nochmal deutlich, und man sp\"urt seine Gelehrtheit, wenn er den Priorit\"atsstreit in die historische Ecke stellt, 
und die Genesis betont. 
Seine dritte These ist die Behauptung, dass Grundbegriffe (wie der Zahlbegriff) in das zweite Semester geh\"oren. 
Als Zusatz bemerkt er, dass der Cantorsche Zahlbegriff dem von Dedekind vorzuziehen sei, nachdem er eine Fundamentalkritik an den Dedekindschen Schnitten
vorgenommen hat. Wenngleich die Idee der Schnitte im Wesentlichen schon in der griechischen Mathematik auftaucht, wie Toeplitz in \cite[S. 97]{toeplitz4} anmerkt, 
lehnt er diese in der Anf\"angervorlesung aus wohlbekannten didaktischen Gr\"unden ab. 
Die Sch\"onheit der Idee von Dedekind, und die M\"oglichkeit eines genetischen Zugangs zu ihr ignoriert er, weil
er Dedekind Ungenauigkeiten unterstellte, die aus heutiger Sicht behebbar sind. 
Als vierte These will Toeplitz die Differentialrechnung nur noch bei ,,rechnerischen'' Funktionen im Gegensatz 
zu ,,willk\"urlichen'' Funktionen sehen. Dies m\"undet in einen historischen Exkurs in die Welt von Barrow, Leibniz und Newton, mit der These, dass die 
Neuerungen von Leibniz und Newton sich nur auf die Weiterentwicklung zum Kalk\"ul, also die Berechenbarkeit beziehen. Diese These zeigt wieder 
die historischen Interessen von Toeplitz. In dem eigentlichen Buch \cite{toeplitz1} wird zu Anfang eine schrittweise Einf\"uhrung in die Gedankenwelt gegeben. 
Erst in \S 9 wird der Grenzwertbegriff exakt eingef\"uhrt und viele der \"ublichen Beweise aus der Epsilontik behandelt. Auf diesem Grenzwertbegriff f\"ur Folgen 
baut dann der Begriff der Reihe und des Integral auf. Das Ende des Buches bringt eine Diskussion der Keplerschen Gesetze. \"Uberhaupt enth\"alt der Stoff 
viele physikalische Anwendungen, \"Ubungsaufgaben, die Toeplitz so wichtig waren, sowie viele historische Bemerkungen und motivierende Abschnitte.
Das bemerkenswerte Buch ist wom\"oglich vielen Analysisb\"uchern der heutigen Zeit \"uberlegen. Die genetische Methode hat sich seitdem verselbstst\"andigt und
wird in vielen Kontexten, neben der historisch-genetischen auch in einer psychologisch-genetischen Variante erw\"ahnt \cite{schubring}. Toeplitz Zugang bezieht 
sich jedoch eindeutig auf die historisch-genetische Richtung, wenngleich er auch die Abgrenzung zum rein historischen Zugang w\"unschte. 
Der genetische Zugang von Toeplitz hat nat\"urlich auch seine Nachteile. Heinrich Behnke macht dazu in \cite[S. 4]{behnke2} eine Bemerkung: \\

\noindent {\it Die einzelnen Fundamentals\"atze sollen historisch entwickelt und dadurch dem H\"orer in einer nat\"urlichen Spannung und zugleich p\"adagogisch naheliegenden
Weise vorgetragen werden. Es liegt nahe zu fragen, wie dazu die erforderliche Zeit gewonnen werden kann.} \\

\noindent Der Schluss des Artikels von Toeplitz wendet sich gegen die Suche nach der absoluten Objektivierung in der Mathematik. 
Toeplitz reiht Hilbert und Weyl in sein Denkschema ein. 
Man merkt bereits in diesem Aufsatz, wie sich Toeplitz von seinen Vorbildern Klein und Hilbert abzugrenzen versucht.

\section*{Die Schnittstelle zwischen Schule und Universit\"at}

Das Problem der Schulmathematik behandelte Toeplitz bereits in seiner Rede \cite{toeplitz4} von 1926. Zwei Jahre sp\"ater hielt er einen Vortrag auf der Tagung der
Naturforscher und \"Arzte \cite{toeplitz5}, der sich genauer mit der Schnittstelle Schule-Hochschule besch\"aftigte. Als Hintergrund muss man wissen, dass 
sich das Schulcurriculum durch den Einfluss besonders von Felix Klein in diesen Zeiten ge\"andert hatte (Kleinsche Reform). Etwa im Jahr 1925 wurde die Differentialrechnung 
als Schulstoff etabliert. Zu Anfang der Rede konstatierte Toeplitz die verfahrene Situation mit den Worten: \\

\noindent {\it Das ist das Bild eines v\"olligen Auseinanderlebens der beiden Instanzen in allen ihren Organen.}\\ 

\noindent Er ging dabei von der damaligen Zahl von 95 \% an Studierenden aus, die einmal in den Schuldienst gehen w\"urden. 
Bis auf diese Beschreibung \"ahnelt die Sachlage unserer heutigen Zeit. Toeplitz konzentriert sich dann auf die Hochschuldidaktik und 
bringt sein Anliegen auf den Punkt, nachdem er von den Schwierigkeiten in Kleins Vorlesung ,,Elementarmathematik vom h\"oheren Standpunkte aus'' berichtet hat 
\cite[S. 3]{toeplitz5}: \\

\noindent {\it Felix Klein hat eine Vorlesung \"uber Elementarmathematik vom h\"oheren Standpunkt 
geschaffen, die zweifellos eine gro{\ss}e Wirkung ausge\"ubt hat. Indessen
hat dieser Versuch wenig Nachahmung durch andere Dozenten gefunden, und
mit Recht ... so erscheint hier eine Materie von
dem \"ubrigen Stoff der Universit\"atsmathematik losgel\"ost und f\"ur sich behandelt: 
die Elementarmathematik und was stofflich mit ihr verwandt ist ... Und gerade ein
solcher Versuch kann nicht in die Tiefe wirken. Aus dem methodischen Rahmen
der Algebra herausgerissen ... 
Der Ausgleich zwischen Stoff und Methode in den heutigen Universit\"atsvorlesungen ist im Prinzip nicht der richtige. In diesem Ausgleich
sehe ich den Schl\"ussel zur L\"osung des ganzen Aufgabenkomplexes, der von der Leitung des Kongresses mit dem Wort von der ,,Spannung'' so trefflich
gekennzeichnet worden ist. } \\

\noindent In diesen Bemerkungen zu Felix Klein wird wieder deutlich, wie Toeplitz im Begriff ist, sich von seinem Vorbild zu l\"osen. 
Neben der Kritik ist dies besonders in seinem viel weitergehenden inhaltlichen Ansatz zu sp\"uren. Man kann argumentieren, dass Felix Klein 
einen Standpunkt hatte, der aufgrund seiner Rolle in erster Linie wissenschaftspolitisch angelegt war und Toeplitz sich bem\"uhte, die Praxis der Ausbildung 
mit Leben zu f\"ullen. Die Verschiedenheit der Personen Toeplitz und Klein wird hier jedenfalls deutlich. \\

\noindent Man muss wissen, dass es zur Zeit dieser Rede an manchen Universit\"aten eine gro{\ss}e Anzahl von H\"orern gab. 
Behnke gibt in \cite[S. 5]{behnke2} an, dass es in Bonn 1928-1930 mehrere hundert Mathematikstudenten gab, und die Anf\"angervorlesung 200 H\"orer hatte.
Dies nimmt Toeplitz zum Anlass, im Abschnitt A auf die diesbez\"uglichen Probleme einzugehen, indem er intensiv das Pr\"ufungswesen bespricht. Interessanter sind seine Bemerkungen 
zu seinem \"Ubungsbetrieb, der ihm sehr am Herzen lag und der unserem heutigen \"ahnlich gewesen sein muss. 
Im Hinblick auf die Abbrecherquote ist ihm das Eingehen auf den einzelnen Studierenden sehr wichtig, und er 
erkl\"art im Detail seine Bonner Versuche, das Individuum zu f\"ordern. Die intensive Betreuung der Studenten war Toeplitz also sehr wichtig, und dies war Teil seines
didaktischen Konzepts. Heinrich Behnke geht auf diese Aktivit\"aten in \cite[S. 5]{behnke2} ein: \\

\noindent {\it In der allgemeinen akademischen Verantwortungs- und Hilflosigkeit f\"uhlt er die Verantwortung f\"ur die allzu gro{\ss}e Zahl seiner H\"orer. 
Das m\"u{\ss}te einmal zum Verderben der Schulen und ihrer Lehrer ausschlagen, wenn man nicht die weniger geeigneten zeitig aus den H\"ors\"alen entfernte. Mit gr\"o{\ss}ter
Umsicht machte er sich sogleich an die Aufgabe, wenigstens in seinem Felde den schw\"acheren Studenten zeitig und in einer warmherzig sorgenden Art den Weg zu versperren. Nie
las er Vorlesungen ohne \"Ubungen. Dort achtete er genau auf die Leistungen der Teilnehmer. Er kannte die geistigen F\"ahigkeiten seiner Studenten im einzelnen. So konnte
bei ihm nicht leicht eine Arbeit unter fremden Namen laufen, weil er in Ansatz, Ausdruck und Schrift die Eigenarten seiner 200 H\"orer kannte und sie immer wieder im Kreise
seiner Mitarbeiter besprach. Die Begabtesten aber aus der gro{\ss}en Schar sammelte er zu vielen Sonderbesprechungen um sich. In v\"ollig ungezwungenem und kameradschaftlichen Umgange,
der auch auf ihn durch viele Anregungen zur\"uckstrahlte, bem\"uhte er sich, ihre fachliche und allgemeine geistige Entwicklung wesentlich zu f\"ordern. Wenn man ihn aufsuchte, fand man 
ihn meistens in diesem geistig und menschlich bewegten Kreise, dem er ebensosehr sorgender Freund wie Lehrer war.}\\

\noindent In diesem Zusammenhang ist auch der Bericht von Gottfried K\"othe aus \cite[S. 10]{bms143} interessant: \\

\noindent {\it  Man traf sich nachmittags bei Toeplitz in seiner sch\"onen Wohnung in der Coblentzer Stra{\ss}e und korrigierte gemeinsam. Wir besprachen anhand der ersten 10 
Hefte die m\"oglichen L\"osungsmethoden; wir wu{\ss}ten, welche der Studenten zu originellen Gedanken f\"ahig waren und kamen so recht bald zu einer Einteilung, in die die gro{\ss}e Masse
der \"Ubungen dann leichter einzuordnen war. Toeplitz legte gro{\ss}en Wert auf sehr sorgf\"altige Durchsicht. Wir waren zu dritt oder zu viert, ich erinnere mich vor allem an Frau Hagemann
und an Helmut Ulm, der gerade an seiner sp\"ater ber\"uhmt gewordenen Dissertation zu arbeiten begann. Da es meist \"uber 100 Hefte waren, sa{\ss}en wir oft bis 9 Uhr abends beisammen, 
zwischendurch brachte Frau Toeplitz Berge von belegten Br\"otchen herein.  } \\

\noindent In Teil B seiner Rede geht Toeplitz auf die Infinitesimalrechnung in der Schule und ihre didaktischen Probleme ein. Er schreibt \cite[S. 15]{toeplitz5}: \\

\noindent {\it Die ganze Schwierigkeit mit der Infinitesimalrechnung auf der Schule ist dadurch entstanden, da{\ss} man sie eingef\"uhrt hat, ehe man das didaktische Problem 
gel\"ost oder auch nur ernsthaft angegriffen hatte, das eben hier aufgeworfen worden ist. Es darf nicht verheimlicht werden, da{\ss} es zur Zeit in der Hauptsache noch ungel\"ost ist.
Davon, ob es gelingt, es zu l\"osen, davon, ob man es \"uberhaupt mit voller Kraft vornimmt, wird es abh\"angen, ob die Infinitesimalrechnung auf der Schule die Stelle sich f\"ur
immer erobert, die sie soeben zu besetzen begonnen hat. Gelingt die L\"osung nicht, so wird die Infinitesimalrechnung in zwei Dezennien ebenso unr\"uhmlich von der Schule verschwinden, 
wie heute die Dreiecksaufgaben verschwunden sind.} \\

\noindent Die Rede endet mit einem Appell, das Methodische zu \"andern, nicht nur das Stoffliche.
Mir scheint, dass die damaligen Probleme der Schnittstelle bis heute ungel\"ost sind, auch wenn es dazu bis heute viele
Ans\"atze gab, womit sich auch die Semesterberichte immer wieder befasst haben.

\section*{Das mathematische Werk von Toeplitz}

Zum mathematischen \OE{}uvre von Otto Toeplitz gibt es \"Ubersichtsartikel von Peter Lax in \cite{bms319}, Jean Dieudonn\'e in \cite{gohberg}
und Gottfried K\"othe \cite{gohberg,koethe}. 
Toeplitz hat bei David Hilbert den H\"ohepunkt von Hilberts Arbeiten \"uber Integralgleichungen miterlebt. Sp\"ater hat er zusammen mit Ernst Hellinger 
einen Beitrag f\"ur die Enzyklop\"adie der mathematischen Wissenschaften geschreiben, in dem die Hauptergebnisse dieser Theorie zusammengefasst sind \cite{toeplitz3}. Er
selbst konzentrierte sich dabei auf die algebraischen Aspekte der Theorie. Toeplitz hat in den Jahren danach
versucht, viele endlich-dimensionale algebraische Eigenschaften, wie das \"Ahnlichkeits- und das \"Aquivalenzproblem von Matrizen, 
auf den unendlich-dimensionalen Fall zu verallgemeinern. Zusammen mit K\"othe hat er die Theorie der vollkommenen R\"aume 
entwickelt, die schlie{\ss}lich in der Theorie der lokalkonvexen und normalen R\"aume aufgegangen sind. 
Seine Verhaftung in algebraischen Ausdrucksweisen kam aus der Tradition von Hilbert, den er - wie wir oben beschrieben haben - als ,,Algebraiker'' bezeichnet 
hat \cite{toeplitz7}, und kann vielleicht auch durch seine fr\"uhen Besch\"aftigungen mit der algebraischen Geometrie erkl\"art werden. 
Man kann sagen, dass die Entwicklung der abstrakten Funktionalanalysis Toeplitz irgendwann \"uberholt hatte. \\

\noindent Wir wollen nun einige mathematische Resultate von Toeplitz erkl\"aren, mit denen sein Name eng verbunden ist. 
Ein bekannter, elementarer Satz von Toeplitz ist 

\begin{theorem}[Hellinger-Toeplitz] 
Sei $T$ ein selbstadjungierter (symmetrischer) linearer Operator in einem Hilbertraum $H$, dann ist $T$ stetig.
\end{theorem}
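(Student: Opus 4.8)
The plan is to deduce boundedness from the completeness of $H$ by means of the closed graph theorem. Since $T$ is assumed to be defined on \emph{all} of $H$, it is enough to show that its graph $\{(x,Tx) : x \in H\} \subseteq H \times H$ is closed; continuity of $T$ then follows from the closed graph theorem. So I would take a sequence $x_n \to x$ in $H$ with $Tx_n \to y$, and argue that $y = Tx$. The decisive move is to test against an arbitrary $z \in H$: by symmetry $\langle Tx_n, z\rangle = \langle x_n, Tz\rangle$, the right-hand side tends to $\langle x, Tz\rangle = \langle Tx, z\rangle$ by continuity of the inner product, while the left-hand side tends to $\langle y, z\rangle$. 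Hence $\langle y - Tx, z\rangle = 0$ for every $z \in H$, which forces $y = Tx$ and closes the graph.

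Alternatively — and this is closer to the Hilbert-flavoured ``algebra of infinitely many variables'' in which Toeplitz liked to think — one can invoke the Banach--Steinhaus uniform boundedness principle. For each $x$ in the closed unit ball of $H$ consider the bounded linear functional $\Lambda_x(z) := \langle z, Tx\rangle$, whose norm equals $\|Tx\|$ by the Riesz representation theorem. Symmetry rewrites this as $\Lambda_x(z) = \langle Tz, x\rangle$, so $|\Lambda_x(z)| \le \|Tz\|\,\|x\| \le \|Tz\|$ for every $x$ in the unit ball; the family $\{\Lambda_x\}$ is therefore pointwise bounded. Uniform boundedness then yields a finite constant $C$ with $\|Tx\| = \|\Lambda_x\| \le C$ for all such $x$, i.e. $\|T\| \le C$.

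The only step that does real work is the appeal to a Baire-category theorem (closed graph theorem or uniform boundedness, which are interchangeable here); the rest is a two-line manipulation of the inner product together with the symmetry hypothesis. The point that genuinely needs care — and that I would state explicitly — is the standing assumption that $T$ is everywhere defined on $H$. This is exactly what makes the closed graph theorem applicable, and it is precisely the hypothesis that fails for the truly unbounded self-adjoint operators of quantum mechanics, whose natural domains are proper dense subspaces; so the theorem does not contradict their existence. I expect no serious obstacle beyond keeping this domain hypothesis in view.
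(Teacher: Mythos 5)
Your first argument is essentially the paper's own proof: the paper likewise applies the closed graph theorem, uses the symmetry $\langle T f_n, g\rangle = \langle f_n, T g\rangle$ together with continuity of the inner product, and merely streamlines the computation by reducing to a null sequence $f_n \to 0$ and testing against the limit $g$ itself instead of an arbitrary $z$. Your second route via Banach--Steinhaus is a correct alternative resting on the same Baire-category input, and your explicit remark that the everywhere-defined domain hypothesis is what makes the argument work (and what fails for the unbounded operators of quantum mechanics) is a worthwhile point the paper does not spell out.
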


\begin{proof} Sei $\Gamma_T$ der Graph von $T$. Wegen des Satzes von der Abgeschlossenheit des Graphen ist zu zeigen, dass jede konvergente Folge 
$(f_n,T(f_n))$ in $\Gamma_T$ auch ihren Limes in $\Gamma_T $ hat. Da $T$ linear ist, k\"onnen wir annehmen, dass $(f_n)$ eine Nullfolge in $H$ ist.
Sei $g:= \lim \limits_{n \rightarrow \infty} T(f_n)$. Wir m\"ussen zeigen, dass $g=0$ gilt. Dazu benutzen wir die Stetigkeit des Skalarproduktes:   
$$
\langle g,g \rangle = \langle \lim \limits_{n \rightarrow \infty}  T(f_n),g \rangle =  \lim \limits_{n \rightarrow \infty} \langle T(f_n),g \rangle 
=\lim \limits_{n \rightarrow \infty} \langle f_n, T(g) \rangle = 
$$
$$
 = \langle \lim \limits_{n \rightarrow \infty}  f_n, T(g) \rangle = \langle 0,  T(g) \rangle =0. \hskip2cm  \qed
$$

\end{proof}

\noindent Die Bedeutung des folgenden sogenannten Permanentsatzes von Toeplitz kommt aus der Theorie der Summierbarkeit 
von konvergenten und divergenten Reihen durch Mittelbildungsverfahren, bei denen man die Folge der Partialsummen $s_n=\sum_{k=0}^n a_k$ einer 
unendlichen Reihe $\sum_{k=0}^\infty a_k$ durch die Folge der Ausdr\"ucke 
$$
t_n=\sum_{q=0}^{\infty} A_{nq} s_q 
$$
ersetzt, wobei f\"ur jedes $n$ nur endlich viele Koeffizienten $A_{nq}$ von Null verschieden sind. 
Ces\`aro Summation ist ein bekanntes Beispiel daf\"ur, bei dem 
$$
t_n=\frac{s_0+ \cdots + s_n}{n+1}
$$ 
ist. Der Satz von Lip\'ot Fej\'er, der besagt, dass die Ces\`aro Summation einer Fourierreihe immer gleichm\"a{\ss}ig gegen die Funktion $f$ konvergiert, falls
$f$ stetig und periodisch ist, ist eines der wichtigsten Beispiele f\"ur solche Summationen.  
Ein solches Mittelbildungsverfahren hei{\ss}t regul\"ar, wenn der Grenzwert bei konvergenten Reihen erhalten bleibt. 

\begin{theorem}[Permanentsatz von Toeplitz \cite{toeplitz16}] Eine solche unendliche Matrix 
$A=(A_{pq})$ definiert ein regul\"ares Mittelbildungsverfahren genau dann, wenn die folgenden drei Bedingungen erf\"ullt sind: 
\begin{itemize}
\item Die Zeilensummen $\sum_q A_{pq}$ erf\"ullen $\lim \limits_{p \rightarrow \infty} \sum_q A_{pq}=1$.
\item Die Spalten erf\"ullen $\lim \limits_{p \rightarrow \infty}  A_{pq}=0$ f\"ur alle $q$. 
\item Die Summe der Zeilenbetr\"age $\sum_q |A_{pq}|$ ist beschr\"ankt unabh\"angig von $p$.
\end{itemize}
\end{theorem}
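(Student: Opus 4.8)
The plan is to prove the two implications separately, dispatching the easy conditions quickly and isolating the uniform boundedness of the row norms as the real difficulty. \emph{Sufficiency.} Assume the three conditions. Since every row of $A=(A_{pq})$ has only finitely many nonzero entries, $t_p=\sum_q A_{pq}s_q$ is a finite sum, hence well defined for any bounded sequence $(s_q)$; suppose $s_q\to s$. From the identity $\sum_q A_{pq}s_q-s=\sum_q A_{pq}(s_q-s)+s\bigl(\sum_q A_{pq}-1\bigr)$ the last summand tends to $0$ by the first condition. For the first summand fix $\varepsilon>0$, choose $N$ with $|s_q-s|<\varepsilon$ for $q>N$, and split at $N$: the finitely many terms with $q\le N$ each tend to $0$ as $p\to\infty$ by the second condition, while the tail is bounded by $\varepsilon\sum_{q>N}|A_{pq}|\le\varepsilon M$ with $M:=\sup_p\sum_q|A_{pq}|$. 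Hence $\limsup_{p}|t_p-s|\le\varepsilon(1+M)$, and letting $\varepsilon\to0$ gives $t_p\to s$, i.e.\ regularity.

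\emph{Necessity of the first two conditions.} Assume $A$ is regular. Testing the method on the constant sequence $s_q\equiv1$, which converges to $1$, forces $\sum_q A_{pq}=t_p\to1$; testing it on the coordinate sequence $s_q=\delta_{qq_0}$, which converges to $0$, forces $A_{pq_0}=t_p\to0$. Both are immediate.

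\emph{Necessity of the third condition, the main obstacle.} Here I would invoke the Banach--Steinhaus theorem. For each $p$ the functional $\phi_p\colon c_0\to\C$, $\phi_p(s)=\sum_q A_{pq}s_q$, is bounded with norm $\|\phi_p\|=\sum_q|A_{pq}|$ (the $p$-th row, being finitely supported, is an element of $\ell^1=c_0^{\,*}$). Regularity says that $(\phi_p(s))_p=(t_p)_p$ converges, hence is bounded, for every $s\in c_0$; so $\{\phi_p\}$ is pointwise bounded on the Banach space $c_0$, and uniform boundedness gives $\sup_p\sum_q|A_{pq}|=\sup_p\|\phi_p\|<\infty$. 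If instead one wants an elementary argument in the spirit of Toeplitz's original paper, the same conclusion follows from a gliding-hump construction: assuming $\sup_p\sum_q|A_{pq}|=\infty$, one inductively picks rows $p_1<p_2<\cdots$ and cut-offs $0=q_0<q_1<\cdots$ so that row $p_k$ carries $\ell^1$-mass at least $k\,2^{k}$ on the block $q_{k-1}\le q<q_k$ while being negligible on $q<q_{k-1}$ (possible by the already proved second condition) and on $q\ge q_k$, then sets $s_q=\operatorname{sign}(A_{p_kq})\,2^{-k}$ for $q$ in the $k$-th block; this $(s_q)$ is a null sequence with $|t_{p_k}|\to\infty$, contradicting the convergence of $(t_p)$. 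Either way, this boundedness step is the only part that requires genuine work — the other two conditions and the whole sufficiency direction are routine.
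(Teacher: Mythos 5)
Your proof is correct, but it is genuinely different from what the paper does: the paper offers no argument at all and simply refers the reader to the main result of Toeplitz's original article \cite{toeplitz16}, whereas you supply a complete, self-contained proof of the Silverman--Toeplitz characterization. Your sufficiency argument (the decomposition $\sum_q A_{pq}s_q-s=\sum_q A_{pq}(s_q-s)+s(\sum_q A_{pq}-1)$ followed by the split at a cut-off $N$) and the necessity of the first two conditions via the test sequences $s_q\equiv 1$ and $s_q=\delta_{qq_0}$ are exactly the standard route and are carried out cleanly; your identification of the uniform bound on $\sup_p\sum_q|A_{pq}|$ as the only nontrivial step is also the right emphasis. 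The Banach--Steinhaus argument on $c_0$ is correct since each finitely supported row is an element of $\ell^1=c_0^{\,*}$ with norm $\sum_q|A_{pq}|$, and your alternative gliding-hump construction is closer in spirit to what Toeplitz could have done in 1911, before the uniform boundedness principle was available; for complex entries one should replace $\operatorname{sign}(A_{p_kq})$ by $\overline{A_{p_kq}}/|A_{p_kq}|$, but this is cosmetic. What the paper's citation buys is brevity in a historical survey; what your proof buys is that the reader actually sees why the three conditions are exactly the right ones, with the functional-analytic and the elementary routes to the hard implication laid side by side.
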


\begin{proof}
Dies ist das Hauptergebnis in \cite[S. 119]{toeplitz16}.   \hfill \qed 
\end{proof}

\noindent Gemeinsam mit Felix Hausdorff bewies Toeplitz ein Theorem zum Wertebereich von Operatoren. Sei $T: H \to H$ ein linearer (nicht notwendig beschr\"ankter) Operator
in einem Hilbertraum $H$. Der numerische Wertebereich $W(T)$ von $T$ ist die Menge aller Werte
$$
\{ \langle T(x),x \rangle \colon   ||x||=1 \}. 
$$

\begin{theorem}[Satz von Toeplitz-Hausdorff \cite{toeplitz17}]
Der numerische Wertebereich $W(T)$ ist konvex. 
\end{theorem}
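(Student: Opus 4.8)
Der Plan ist, f\"ur zwei beliebige Punkte $\lambda_1,\lambda_2\in W(T)$ nachzuweisen, dass die Verbindungsstrecke $[\lambda_1,\lambda_2]$ ganz in $W(T)$ liegt; der Fall $\lambda_1=\lambda_2$ ist trivial, sei also $\lambda_1\neq\lambda_2$. Zuerst w\"urde ich die Aussage durch eine affine Normierung auf den Fall $\lambda_1=0$, $\lambda_2=1$ zur\"uckf\"uhren: Wegen $W(\alpha T+\beta I)=\alpha\,W(T)+\beta$ f\"ur Skalare $\alpha\neq 0$, $\beta$ und weil affine Abbildungen Strecken auf Strecken abbilden, gen\"ugt es, f\"ur $A:=\alpha T+\beta I$ mit geeignet gew\"ahlten $\alpha,\beta$ (so dass $\alpha\lambda_1+\beta=0$ und $\alpha\lambda_2+\beta=1$) die Inklusion $[0,1]\subseteq W(A)$ zu zeigen. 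Dabei stehen Einheitsvektoren $\xi,\eta$ im (linearen) Definitionsbereich von $T$ mit $\langle A\xi,\xi\rangle=0$ und $\langle A\eta,\eta\rangle=1$ zur Verf\"ugung; diese sind notwendig linear unabh\"angig, da sich andernfalls $\langle A\xi,\xi\rangle=\langle A\eta,\eta\rangle$ erg\"abe.

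Als n\"achsten -- und entscheidenden -- Schritt w\"urde ich $\eta$ durch $e^{i\theta}\eta$ mit einem passend gew\"ahlten Winkel $\theta$ ersetzen. Das l\"asst $\|\eta\|=1$ und $\langle A\eta,\eta\rangle=1$ unber\"uhrt, multipliziert aber $\langle A\xi,\eta\rangle$ mit $e^{-i\theta}$ und $\langle A\eta,\xi\rangle$ mit $e^{i\theta}$. Die Forderung, dass der Kreuzterm $\langle A\xi,\eta\rangle+\langle A\eta,\xi\rangle$ nach dieser Ersetzung reell ausf\"allt, l\"auft -- mit $w:=\langle A\xi,\eta\rangle-\overline{\langle A\eta,\xi\rangle}$ -- auf die Gleichung $e^{-2i\theta}w=\overline{w}$ hinaus; f\"ur $w=0$ ist nichts zu tun, andernfalls leistet $\theta=\arg w$ das Gew\"unschte. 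Nach dieser Ersetzung schreibe ich $2c:=\langle A\xi,\eta\rangle+\langle A\eta,\xi\rangle\in\R$.

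Den Abschluss besorgt der Zwischenwertsatz. F\"ur $t\in[0,1]$ setze ich
\[
x_t:=\frac{(1-t)\xi+t\eta}{\|(1-t)\xi+t\eta\|}\,;
\]
der Nenner ist wegen der linearen Unabh\"angigkeit von $\xi,\eta$ auf ganz $[0,1]$ positiv, und $x_t$ liegt im Definitionsbereich von $A$, da dieser ein linearer Teilraum ist -- die Unbeschr\"anktheit von $T$ bereitet also keine Schwierigkeit. Ausmultiplizieren von $\langle A((1-t)\xi+t\eta),(1-t)\xi+t\eta\rangle$ liefert dank der Normierung und der Phasenwahl
\[
\langle Ax_t,x_t\rangle=\frac{t^2+2c\,t(1-t)}{\|(1-t)\xi+t\eta\|^2}\,,
\]
also einen Quotienten reeller Polynome in $t$ mit positivem Nenner und damit eine reellwertige stetige Funktion $g$ mit $g(0)=0$ und $g(1)=1$. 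Nach dem Zwischenwertsatz nimmt $g$ jeden Wert in $[0,1]$ an, und da stets $g(t)\in W(A)$ gilt, folgt $[0,1]\subseteq W(A)$; nach dem ersten Schritt ist damit $[\lambda_1,\lambda_2]\subseteq W(T)$ und somit die Konvexit\"at von $W(T)$ bewiesen.

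Als Hauptschwierigkeit erwarte ich genau die Phasenwahl im zweiten Schritt: Sie ist der einzige Schritt, der \"uber reine Buchhaltung hinausgeht, und ohne sie ist der Z\"ahler in der obigen Formel im Allgemeinen komplex, so dass $g$ komplexwertig wird und das Argument \"uber den Zwischenwertsatz zusammenbricht. (Alternativ k\"onnte man $T$ ganz zu Beginn durch die Kompression $PTP$ mit der orthogonalen Projektion $P$ auf den h\"ochstens zweidimensionalen Unterraum $\mathrm{span}(\xi,\eta)$ ersetzen; wegen $Px=x$ f\"ur $x$ aus diesem Raum bleiben die relevanten Werte $\langle Tx,x\rangle$ dabei unver\"andert. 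F\"ur die obige Rechnung wird diese klassische Reduktion auf den $2\times 2$-Fall jedoch nicht ben\"otigt.)
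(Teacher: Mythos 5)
Your argument is correct and complete. Note that the paper itself gives no proof of this theorem --- it merely refers the reader to Gustafson's elementary proof --- so your write-up actually supplies what the paper only cites. Your three steps all check out: the affine normalization $W(\alpha T+\beta I)=\alpha W(T)+\beta$ with $\alpha\neq 0$ legitimately reduces to showing $[0,1]\subseteq W(A)$; the phase rotation $\eta\mapsto e^{i\theta}\eta$ leaves $\|\eta\|$ and $\langle A\eta,\eta\rangle$ fixed while the condition $e^{-2i\theta}w=\overline{w}$ is indeed solvable for every $w$ (with $\theta=\arg w$ when $w\neq 0$), so the cross term can always be made real; and the resulting function $g(t)=\bigl(t^2+2c\,t(1-t)\bigr)/\|(1-t)\xi+t\eta\|^2$ is real-valued and continuous with positive denominator (by linear independence of $\xi,\eta$), so the intermediate value theorem delivers $[0,1]\subseteq W(A)$. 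You also correctly handle the only point where unboundedness could intrude, namely that $x_t$ stays in the (linear) domain of $T$. This is essentially the classical elementary argument (as in Gustafson and in Halmos's treatment), and your closing remark rightly identifies the phase normalization as the one genuinely non-mechanical step.
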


\begin{proof} 
Ein elementarer Beweis findet sich in \cite{gustafson}.  \hfill \qed 

\end{proof}

\noindent Wir wollen zum Abschluss erkl\"aren, auf welche Weise die sogenannten Toeplitzmatrizen entstehen, die so weit verbreitet in der Mathematik und in Anwendungen sind.  
Sei dazu $f(\varphi)$ eine reelle, periodische Funktion mit Periode $2\pi$ und 
\[
\sum_{n \in \Z} c_n e^{i n \varphi}
\]
die Fourierreihe von $f$. Toeplitz stellt sich in der Arbeit \cite{toeplitz6} die Frage, wie man die Positivit\"at von $f$ auf $[0,2\pi]$ durch 
ein Kriterium an die Koeffizienten $c_n$ ausdr\"ucken kann. Ein solches folgt leicht aus folgender Rechnung mit 
Hilfsvariablen $\xi_n$ ($n \in \Z$), welche Peter Lax in \cite[S. 87-88]{gohberg} andeutet, 
unter Verwendung der Orthonormalbasis $\{e^{i n \varphi} \mid n \in  \N_0 \}$:
\begin{align*}
\int_0^{2\pi} \left| \sum_{n=0}^N \xi_n e^{i n \varphi} \right|^2 f(\varphi) d \varphi 
= & \int_0^{2\pi} \sum_{m,n=0}^N \xi_n \bar{\xi}_m e^{i (n-m) \varphi} \sum_{k \in \Z} c_k e^{i k \varphi} d \varphi  \\
= & 2 \pi \sum_{m,n=0}^N c_{m-n} \xi_n \bar{\xi}_m. 
\end{align*} 
Den Ausdruck 
\[
\sum_{m,n=0}^\infty c_{m-n} \xi_n \bar{\xi}_m 
\]
nennt Toeplitz eine L-Form. Somit ist $f$ positiv genau dann, wenn alle quadratischen Formen $\sum_{m,n=0}^N c_{m-n} \xi_n \bar{\xi}_m$ positiv definit sind, 
d.h.  f\"ur alle $N$ deren Determinanten positiv sind. Die symmetrische Matrix zu einer solchen Form hat die Gestalt, die man Toeplitzmatrix nennt, 
\[
\left| \begin{array}{lllllll}
c_0    & c_1 & c_2 & c_3 & \ldots & \ldots & \ldots \cr
c_{-1} & c_0 & c_1 & c_2 & c_3 & \ldots & \ldots  \cr
c_{-2} & c_{-1} & c_0 & c_1 & c_2 & c_3 & \ldots  \cr
c_{-3} & c_{-2} & c_{-1} & c_0 & c_1 & c_2 & \cdots  \cr
\vdots & \ddots & \ddots & \ddots & \ddots & \ddots & \ddots
\end{array} \right|
\]
mit Eintr\"agen $c_{ij}=c_{j-i}$, die nur von $j-i$ abh\"angen, also auf den Diagonalen konstant sind.                   
Toeplitz zeigt weiterhin, dass der Operator, der durch eine (regul\"are) L-Form gegeben wird, als Spektrum 
genau die Funktionswerte der zu der L-Form zugeordneten analytischen Funktion 
\[
\sum_{n \in \Z} c_n z^n 
\]
auf dem Einheitskreis besitzt, siehe \cite[Satz 5]{toeplitz6}. \\

\noindent
{\bf Dank:} Ich danke Hans Niels Jahnke, Sebastian Lange, Martin Mattheis, Astrid Mehmel, Sieglinde M\"uller-Stach, David Rowe, Daniel Schneider, J\"orn Steuding, 
Duco van Straten, Renate Tobies und Klaus Volkert f\"ur Diskussionen, Verbesserungen und Hinweise, der Universit\"atsbibliothek Bonn f\"ur die 
Digitalisierung des Nachlasses sowie die Bonner Mathematischen Schriften 
und der Universit\"atsbibliothek G\"ottingen f\"ur die Kopien der Briefe von Toeplitz an Hilbert.

\end{document}